\documentclass[preprint,12pt]{elsarticle}
\usepackage{graphicx} 
\usepackage{amssymb}
\usepackage{amsthm}
\usepackage{graphics}
\usepackage{epsfig}
\usepackage{amssymb}
\usepackage{hyperref}
\usepackage{graphicx}
\usepackage{subfigure}
\usepackage{float}
\usepackage{color}
\usepackage[dvipsnames]{xcolor}
\usepackage[svgnames]{xcolor}
\usepackage[x11names]{xcolor}
\usepackage{tikz-network}
\usepackage{tikz}
\usepackage{tikz,pgfplots}
\usetikzlibrary{positioning, arrows.meta}
\usetikzlibrary{patterns}
\usepackage[ruled,vlined]{algorithm2e}
\usepackage{caption}
\DeclareCaptionLabelFormat{cont}{#1~#2\alph{ContinuedFloat}}
\usepackage{fullpage}
\usepackage{amsmath}
\usepackage{amsfonts}
\usepackage{enumerate}
\usepackage{enumitem} 
\usepackage{tikz}
\usepackage{array}
\usepackage{bigdelim}
\usepackage{nicematrix}
\usepackage{mathtools}
\usepackage{blkarray, bigstrut}
\usepackage{gauss}
\usepackage{xcolor}
\usepackage{tabularray}
\UseTblrLibrary{booktabs}

\newcommand{\pf}{\noindent {\bf Proof: }}

\newtheorem*{theoremaux}{Theorem \theoremauxnum}
\gdef\theoremauxnum{1}

\newtheorem{lemma}{\bf Lemma}[section]
\newtheorem{theorem}{\bf Theorem}[section]

\newtheorem{proposition}[lemma]{\bf Proposition}
\newtheorem{corollary}[lemma]{\bf Corollary}
\newtheorem{definition}{\bf Definition}[section]
\newtheorem{remark}{\bf Remark}[section]

\journal{~}

\begin{document}

\begin{frontmatter}
\title{{Cayley colour integral groups}}





 \author[1]{Sauvik Poddar\corref{cor1}}
\ead{sauvikpoddar1997@gmail.com}

\author[1]{Angsuman Das}
\ead{angsuman.maths@presiuniv.ac.in}

\address[1]{Department of Mathematics, Presidency University, 86/1 College Street, Kolkata 700073, India\\}
\cortext[cor1]{Corresponding author}

%

\begin{abstract}
A finite group $G$ is said to be Cayley integral if every undirected Cayley graph $\operatorname{Cay}(G,S)$ on $G$ is integral. In this paper, we introduce three natural extensions of this concept; namely as: Cayley colour integral, $\mathfrak{F}$-Cayley colour integral and normal Cayley integral groups. We characterize the first two families in its entirety. The last family of groups is shown to be coinciding with inverse semi-rational groups introduced by Chillag and Dolfi, thereby providing an alternative characterization for the same. We also establish an inclusion hierarchy among these families.
\end{abstract}

\begin{keyword}
	  Cayley colour graph \sep integral graph \sep character \sep inverse semi-rational groups
      \MSC[2008] 05C25, 05C50
	
\end{keyword}
\end{frontmatter}

\section{Introduction}

All the groups and graphs considered here are finite. 
Let $G$ be a group with identity $1$ and $S\subseteq G\setminus\lbrace{1}\rbrace$ be an inverse-closed subset, i.e., $S=S^{-1}$, where $S^{-1}\coloneqq\lbrace{s^{-1}~|~s\in S}\rbrace$. A \textit{Cayley graph} $\operatorname{Cay}(G,S)$ of G with respect to the \textit{connection set} $S$ is an undirected graph with $G$ as the set of vertices and two distinct vertices $u,v\in G$ are adjacent in $\operatorname{Cay}(G,S)$ if and only if $uv^{-1}\in S$. 
A set $S\subseteq G$ is said to be \textit{normal} in $G$ if $gSg^{-1}=S$ for all $g\in G$. A Cayley graph is said to be \textit{normal} if the connection set is normal.

Cayley colour digraph serves as one of the generalizations of Cayley graphs. For a finite group $G$ and a function $f:G\to\mathbb{C}$ (also called \textit{connection function}), the \textit{Cayley colour digraph} \cite{babai1979spectra}, denoted by $\operatorname{Cay}(G,f)$, is defined to be the directed graph with vertex set $G$ and arc set $\lbrace{(g,h)\mid g,h\in G}\rbrace$ such that each arc $(g,h)$ has colour $f(gh^{-1})$.
The adjacency matrix of $\operatorname{Cay}(G,f)$ is defined to be the matrix whose rows and columns are indexed by the elements of $G$, and the $(g,h)$-entry is equal to $f(gh^{-1})$, i.e., $A(\operatorname{Cay}(G,f)=[f(gh^{-1})]_{g,h\in G}$. The eigenvalues of $\operatorname{Cay}(G,f)$ are the eigenvalues of its adjacency matrix. 

Note that the adjacency matrix of $\operatorname{Cay}(G,f)$ is not symmetric in general and hence may contain both real and complex eigenvalues. In particular, when $f:G\to\lbrace{0,1}\rbrace$ and the set $S\coloneqq\lbrace{g\in G\mid f(g)=1}\rbrace$ satisfies $1\notin S$ and $S^{-1}=S$, the Cayley colour digraph $\operatorname{Cay}(G,f)$ can be identified with the Cayley graph $\operatorname{Cay}(G,S)$ and the adjacency matrix of $\operatorname{Cay}(G,f)$ coincides with that of $\operatorname{Cay}(G,S)$. Following the notion of digraph, Cayley colour graph was introduced in \cite{poddar2026algebraic}.

\begin{definition}(\cite{poddar2026algebraic})
Let $G$ be a finite group and $f:G\to\mathbb{Q}$ be such that $f(g)=f(g^{-1})$ for all $g\in G$. The Cayley colour graph, denoted by $\Gamma_f=\operatorname{Cay}(G,f)$, is defined to be the undirected graph with vertex set $G$ and edge set $\lbrace{\lbrace{g,h}\rbrace\mid f(gh^{-1})\neq 0,~g,h\in G}\rbrace$, where each edge $\lbrace{g,h}\rbrace$ has some non-zero colour $f(gh^{-1})$.
\end{definition}

A function $f:G\to\mathbb{C}$ is called a \textit{class function} if $f(ghg^{-1})=f(h)$, for all $g,h\in G$ or equivalently, if $f$ is constant on each conjugacy class of $G$. The set of all class functions, denoted by $Z(L(G))$, forms a vector space over $\mathbb{C}$, where $Z(L(G))$ is the center of the group algebra $L(G)=\mathbb{C}^G=\lbrace{f\mid f:G\to\mathbb{C}}\rbrace$.

\medskip

A graph is said to be \textit{integral} if all its adjacency eigenvalues are integers. The quest of characterizing integral graphs was first initiated by Harary and Schwenk \cite{harary1974graphs}. A great deal of research has been done on integral graphs \cite{ahmadi2009graphs,balinska2002survey}. Especially, Cayley graphs that are integral have been studied extensively over years \cite{abdollahi2009cayley,abdollahi2011integral,godsil2025integral,guo2019integral,klotz2011integral}. Alperin and Peterson \cite{alperin2012integral} have completely characterized the integrality condition for Cayley graphs over abelian groups, and later works extended these results to specific non-abelian families \cite{cheng2019integral,cheng2023integral,huang2021integral,lu2018integral}.

\medskip

The aim of this paper is to generalize the concept of Cayley integral groups, introduced by Klotz and Sander \cite{klotz2010integral}. A group $G$ is said to be \textit{Cayley integral} (or $\operatorname{CI}$\footnote{The term $\operatorname{CI}$ also stands for \textit{Cayley isomorphism} in literature, which is far from our topic. Interested readers can see \cite{li2002isomorphisms} to get a brief overview.}, in short) if $\operatorname{Cay}(G,S)$ is integral for every connection set $S$ of $G\setminus\lbrace{1}\rbrace$. Inspired by this definition, we introduce the following notions. We define the set
$$\mathfrak{F}\coloneqq\lbrace{f\in Z(L(G))\mid f(G)\subseteq\mathbb{Z}\text{ and } f(g)=f(g^{-1})\text{ for all }g\in G}\rbrace.$$

\begin{definition}\label{definition-Cay-col-int-groups}
A group $G$ is said to be
\begin{enumerate}
\item \textbf{Cayley colour integral} (or $\operatorname{\mathfrak{C}CI}$, in short) if for every function $f:G\to\mathbb{Z}$ satisfying $f(g)=f(g^{-1})$ for all $g\in G$, the Cayley colour graph $\Gamma_f=\operatorname{Cay}(G,f)$ is integral.
\item \textbf{$\mathfrak{F}$-Cayley colour integral} (or $\mathfrak{F}$-$\operatorname{\mathfrak{C}CI}$, in short) if for every function $f\in\mathfrak{F}$, the Cayley colour graph $\Gamma_f=\operatorname{Cay}(G,f)$ is integral.
\item \textbf{normal Cayley integral} (or $\operatorname{NCI}$, in short) if every normal Cayley graph $\Gamma=\operatorname{Cay}(G,S)$ on $G$ is integral.
\end{enumerate}
\end{definition}

Klotz and Sander \cite{klotz2010integral} characterized all abelian groups $G$ which are Cayley integral. Later Abdollahi, Jazarei \cite{abdollahi2014groups} and Ahmady, Bell, Mohar \cite{ahmady2014integral} independently extended their result by characterizing all non-abelian groups which are Cayley integral. For $k\in\mathbb{N}$, let
$$\mathcal{G}_k\coloneqq\lbrace{G\mid\operatorname{Cay}(G,S) \text{ is integral whenever $|S|\le k$}}\rbrace.$$
It is quite obvious that $\mathcal{G}_1$ is just the class of all finite groups, and $\mathcal{G}_2$ consists of exactly those groups whose elements are of order $1,2,3,4$ or $6$, and contain no subgroup isomorphic to $D_n$ for $n\ge 4$, where $D_n=\langle{a,b\mid a^n=b^2=1,ab=ba^{-1}}\rangle$ is the dihedral group of order $2n$. Est{\'e}lyi and Kov{\'a}cs \cite{estelyi2014groups} generalized the class of Cayley integral groups by completely classifying the groups in $\mathcal{G}_k$ for $k\ge 4$. Later, Ma and Wang \cite{ma2016finite} classified the groups in $\mathcal{G}_3$. 

\medskip

The main goal of this paper is to classify $\operatorname{\mathfrak{C}CI}$ groups, $\mathfrak{F}$-$\operatorname{\mathfrak{C}CI}$ groups and provide a few characterizations of $\operatorname{NCI}$ groups. In the process, we prove the implications as shown by Figure \ref{chain-diagram}. 




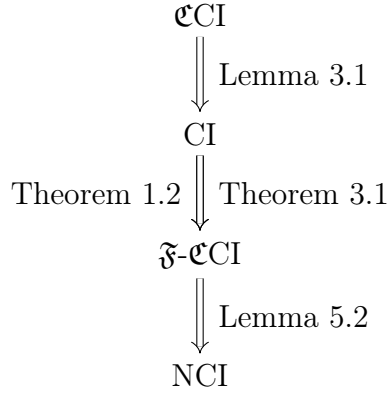
\begin{figure}[h]
\centering
\begin{tikzpicture}[
    node distance=1cm,
    imply/.style={-{Implies}, double, double distance=2pt, thin}
]

    \node (cci) {$\mathfrak{C}$CI};
    \node (ci) [below=of cci] {CI};
    \node (fcci) [below=of ci] {$\mathfrak{F}$-$\mathfrak{C}$CI};
    \node (nci) [below=of fcci] {NCI};

    \draw [imply] (cci) -- node[midway, right=0.1cm] {Lemma \ref{CCI-implies-CI-lemma}} (ci);
    \draw [imply] (ci) -- node[midway, right=0.1cm] {Theorem \ref{CI-groups-theorem}} (fcci);
    \draw [imply] (ci) -- node[midway, left=0.1cm]{Theorem \ref{finite-F-CCI-groups-theorem}} (fcci);
    \draw [imply] (fcci) -- node[midway, right=0.1cm] {Lemma \ref{F-CCI-implies-NCI-lemma}} (nci);

\end{tikzpicture}
\caption{Implications among the groups mentioned in Definition \ref{definition-Cay-col-int-groups}.}
\label{chain-diagram}
\end{figure}

\begin{remark}
The implications in Figure \ref{chain-diagram} are proper. For example,
\begin{itemize}
\item $S_3$ and $\mathbb{Z}_3\rtimes\mathbb{Z}_4$ are $\operatorname{CI}$ but not $\operatorname{\mathfrak{C}CI}$ (from Lemma \ref{S3-Dic12-non-CCI-lemma}).
\item $A_4$ is $\operatorname{\mathfrak{F}-\mathfrak{C}CI}$ but not $\operatorname{CI}$ (from Theorem \ref{CI-groups-theorem} and Theorem \ref{finite-F-CCI-groups-theorem}).
\item $Q_8\times\mathbb{Z}_3$ is $\operatorname{NCI}$ but not $\operatorname{\mathfrak{F}-\mathfrak{C}CI}$ (from Proposition \ref{NCI-direct-product-proposition} and Theorem \ref{finite-F-CCI-groups-theorem}).
\end{itemize}
\end{remark}

\subsection{Organization of the paper}

\medskip

In Section \ref{prelims-notations} we recall basic preliminaries regarding character theory and Cayley graphs. In Section \ref{cci_section} we provide the complete classification of CCI groups by proving the following theorem.

\begin{theorem}\label{CCI-groups-theorem}
A group $G$ is $\operatorname{\mathfrak{C}CI}$ if and only if $G$ is isomorphic to one of the following groups.
\begin{itemize}
\item $\mathbb{Z}_2^n\times\mathbb{Z}_3^m$, $m,n\ge 0$.
\item $\mathbb{Z}_2^n\times\mathbb{Z}_4^m$, $m,n\ge 0$.
\item $Q_8\times\mathbb{Z}_2^n$, $n\ge 0$, where $Q_8$ is the quaternion group of order $8$.
\end{itemize}
\end{theorem}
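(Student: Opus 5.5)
We need the spectrum of $\Gamma_f=\operatorname{Cay}(G,f)$ for an arbitrary symmetric $f:G\to\mathbb{Z}$. The plan is to express it through the representation theory of $G$ and then combine it with the known classification of $\operatorname{CI}$ groups.

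\textbf{Spectrum.} The adjacency matrix $A=[f(gh^{-1})]$ is the matrix of the group-algebra element $\sum_g f(g)g$ acting on the regular representation. Hence the spectrum of $\Gamma_f$ is the union, over irreducible representations $\rho$, of the spectra of the matrices
\[ \rho(f)=\sum_{g\in G} f(g)\rho(g), \]
each taken with multiplicity $\deg\rho$. Because $f(g)=f(g^{-1})$, each $\rho(f)$ is Hermitian for a unitary $\rho$, so all eigenvalues are real.

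\textbf{Necessity.} First, $\operatorname{\mathfrak{C}CI}$ implies $\operatorname{CI}$ (Lemma \ref{CCI-implies-CI-lemma}), since a Cayley graph $\operatorname{Cay}(G,S)$ is $\Gamma_f$ for $f$ the indicator function of $S$. So $G$ lies in the known list of $\operatorname{CI}$ groups (Klotz--Sander; Abdollahi--Jazarei; Ahmady--Bell--Mohar):
\[ \mathbb{Z}_2^n\times\mathbb{Z}_3^m,\quad \mathbb{Z}_2^n\times\mathbb{Z}_4^m,\quad \mathbb{Z}_2^n\times Q_8,\quad S_3,\quad \mathbb{Z}_3\rtimes\mathbb{Z}_4. \]
It therefore suffices to exclude $S_3$ and $\operatorname{Dic}_{12}=\mathbb{Z}_3\rtimes\mathbb{Z}_4$ (Lemma \ref{S3-Dic12-non-CCI-lemma}). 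For $S_3=\langle a,b\rangle$ we take $f=2\cdot\mathbf 1_{\{b\}}+\mathbf 1_{\{a,a^{-1}\}}$, or a similar weighting. On the $2$-dimensional representation, $\rho(f)$ has trace $2\,\mathrm{tr}\rho(b)+2\,\mathrm{tr}\rho(a)=0-2=-2$. Its determinant is non-square in the appropriate sense, giving eigenvalues $-1\pm\sqrt{c}$ with $c$ not a perfect square. A direct computation with $\rho(a)=\mathrm{diag}(\omega,\bar\omega)$ and $\rho(b)$ the swap gives
\[ \rho(f)=\begin{pmatrix}-1&2\\2&-1\end{pmatrix}, \]
which has integer eigenvalues, so the weights must be tuned. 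We will pick weights $\alpha$ on $b$ and $\beta$ on $\{a,a^{-1}\}$. The eigenvalues are then $-\beta\pm|\alpha|$, which are always integers, so one must instead weight reflections differently, e.g. $f=\mathbf 1_{\{b\}}+2\cdot\mathbf 1_{\{ab\}}$. This produces an off-diagonal entry $1+2\omega$ of modulus $\sqrt3$, giving eigenvalues $\pm\sqrt3$. For $\operatorname{Dic}_{12}$ we pull this $f$ back through the quotient $\operatorname{Dic}_{12}\to S_3$, summing the weights over cosets of the central subgroup of order $2$. The same $2$-dimensional representation then yields $\pm2\sqrt3$, or we adapt the choice so the pulled-back function is still symmetric.

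\textbf{Sufficiency.} For an abelian group of exponent dividing $4$ or $6$ in the listed forms, every eigenvalue is
\[ \chi(f)=\sum_g f(g)\chi(g). \]
Symmetry of $f$ pairs $g$ with $g^{-1}$, so $\chi(f)=\sum f(g)\,\mathrm{Re}\,\chi(g)$. Each $\mathrm{Re}\,\chi(g)$ lies in $\{\pm1,0\}$ for exponent $4$, and in $\{1,-\tfrac12\}$ for $\mathbb{Z}_3^m$, where $g\ne g^{-1}$ always, so the halves pair up into integers. For $\mathbb{Z}_2^n$ all values are $\pm1$. The mixed group $\mathbb{Z}_2^n\times\mathbb{Z}_3^m$ is handled the same way, since the character values are $\pm1$ times cube roots of unity.

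\textbf{Main obstacle.} The hard case is $Q_8\times\mathbb{Z}_2^n$, where the $2$-dimensional representations give $\rho(f)=\sum f(g)\rho(g)$ with general integer weights on noncentral elements. Since $g^{-1}=-g$ in $\rho$ for $g\in\{\pm i,\pm j,\pm k\}$ (times $\mathbb{Z}_2^n$ signs), the symmetric weights cancel those terms exactly. Thus $\rho(f)=\big(\sum_{g\ \mathrm{central}} \pm f(g)\big)I$, which is an integer scalar. I expect the delicate points to be precisely this cancellation and the explicit non-integral examples for $S_3$ and $\operatorname{Dic}_{12}$.
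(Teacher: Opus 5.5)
Your argument is correct. For necessity it follows the paper's outline: $\operatorname{\mathfrak{C}CI}\Rightarrow\operatorname{CI}$, then the classification of $\operatorname{CI}$ groups, then excluding $S_3$ and $\mathbb{Z}_3\rtimes\mathbb{Z}_4$. The execution of the exclusions and of the sufficiency direction differs from the paper.

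\textbf{Exclusions.} The paper uses separate weightings for the two groups, with spectra computed in SageMath. Your final choice $f=\mathbf{1}_{\{b\}}+2\cdot\mathbf{1}_{\{ab\}}$ on $S_3$ can be checked by hand: the off-diagonal entry is $1+2\omega=i\sqrt3$, so the two-dimensional block has eigenvalues $\pm\sqrt3$. Inflating $f$ along $\mathbb{Z}_3\rtimes\mathbb{Z}_4\to S_3$ then gives $\pm2\sqrt3$ and disposes of the dicyclic group at once. In effect this shows that $\operatorname{\mathfrak{C}CI}$ passes to quotients. The pullback $f\circ\pi$ is automatically symmetric, so your hedge about adapting the choice is unnecessary. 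The abandoned attempts in that paragraph should be cut from a final write-up.

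\textbf{Sufficiency.} The paper argues structurally. In abelian groups and in $Q_8\times\mathbb{Z}_2^n$, every conjugacy class lies inside some $\{g,g^{-1}\}$, so every symmetric $f$ is a class function. The $\mathfrak{F}$-$\operatorname{\mathfrak{C}CI}$ theorem (element orders in $\{1,2,3,4,6\}$) then finishes; it rests on the Galois criterion $f^h=f$ quoted from the authors' earlier work. You instead compute eigenvalues directly from the regular representation. In the abelian case you pair $g$ with $g^{-1}$. For $Q_8\times\mathbb{Z}_2^n$ you use $\rho(g^{-1})=-\rho(g)$ for noncentral $g$ in the two-dimensional representations, which makes $\rho(f)$ an integer scalar.

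\textbf{Comparison.} Your route is self-contained and needs neither an external integrality criterion nor computer algebra. The paper's route shows the structural reason these groups work: symmetric functions are forced to be class functions. That is exactly what fails in $S_3$ and the dicyclic group, and your counterexamples exploit it by being non-constant on the class of $b$. The paper's route also fits the result into its hierarchy of classes.

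\textbf{One small omission.} You treat only the two-dimensional representations of $Q_8\times\mathbb{Z}_2^n$. Add that its linear characters take values $\pm1$, since its abelianization is $\mathbb{Z}_2^{n+2}$, so they also give integer eigenvalues.
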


Section \ref{fcci_section} classifies the $\mathfrak{F}$-CCI groups and establishes their basic properties. In particular, we prove the following theorem.

\begin{theorem}\label{finite-F-CCI-groups-theorem}
A group $G$ is $\mathfrak{F}$-$\operatorname{\mathfrak{C}CI}$ if and only if $o(g)\in\lbrace{1,2,3,4,6}\rbrace$ for all $g\in G$.
\end{theorem}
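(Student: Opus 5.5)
The plan is to prove the two directions separately, working with the decomposition of the adjacency matrix $A_f=[f(gh^{-1})]$ through the irreducible representations of $G$. Fix a full set of unitary irreducible representations $\rho$ of $G$. The spectrum of $\operatorname{Cay}(G,f)$ is the union, over all $\rho$, of the spectra of $\widehat f(\rho)=\sum_{h\in G} f(h)\rho(h)$, each taken with multiplicity $\deg\rho$. Two facts are used throughout. First, because $f(h)=f(h^{-1})$ and $\rho$ is unitary, $\widehat f(\rho)$ is Hermitian, so all its eigenvalues are real. Second, because $f$ is integer valued, $A_f$ is an integer matrix, so every eigenvalue is an algebraic integer. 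Hence integrality of $\Gamma_f$ is equivalent to every eigenvalue being rational.

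\textbf{Sufficiency.} Suppose every element has order in $\{1,2,3,4,6\}$. Then the exponent $e$ of $G$ divides $4$ or $6$, and so $\mathbb{Q}(\zeta_e)$ is $\mathbb{Q}$, $\mathbb{Q}(i)$ or $\mathbb{Q}(\omega)$. In every case $\operatorname{Gal}(\mathbb{Q}(\zeta_e)/\mathbb{Q})$ is generated by complex conjugation. For $f\in\mathfrak{F}$, Schur's lemma shows that $\widehat f(\chi)$ is the scalar
$$\lambda_\chi=\frac{1}{\chi(1)}\sum_{h\in G} f(h)\chi(h).$$
This scalar lies in $\mathbb{Q}(\zeta_e)$, and it is real because $\widehat f(\chi)$ is Hermitian. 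It is therefore fixed by complex conjugation, hence by the whole Galois group, so $\lambda_\chi\in\mathbb{Q}$. Being an algebraic integer, $\lambda_\chi\in\mathbb{Z}$, and $\Gamma_f$ is integral.

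\textbf{Necessity.} Suppose some $g\in G$ has order $n\notin\{1,2,3,4,6\}$. The idea is to build an admissible $f$ whose graph contains irrational eigenvalues coming from the cyclic subgroup $\langle g\rangle$. The model case is $f=\mathbf 1_{\{g,g^{-1}\}}$. For this $f$, $\Gamma_f$ is a disjoint union of $n$-cycles (the cosets of $\langle g\rangle$), which has the eigenvalue $2\cos(2\pi/n)$. This number is irrational precisely because $\varphi(n)>2$.

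\textbf{Main obstacle.} The step I expect to be hardest is ensuring that the witness $f$ genuinely belongs to $\mathfrak{F}$ as defined; this is where the argument is least certain. For $G$ abelian, $\mathbf 1_{\{g,g^{-1}\}}$ is a class function and the argument is complete. For nonabelian $G$, I would first try $f=\mathbf 1_{K\cup K^{-1}}$, where $K$ is the conjugacy class of $g$, and compute $\lambda_\chi$ on a character $\chi$ whose restriction to $\langle g\rangle$ contains a faithful linear character. I would then try to force irrationality by passing to a suitable sub- or quotient structure. This is delicate, since $\lambda_\chi$ depends only on $\chi(g)+\chi(g^{-1})$ and may be rational, as in rational groups such as $S_5$. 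If that fails, the membership condition for $\mathfrak{F}$ has to be re-examined against the paper's intended meaning, so that the cyclic witness above is admissible.
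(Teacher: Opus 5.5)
Your proposal has two genuine gaps. The sufficiency direction is fixable; the necessity direction cannot be completed because it is false.

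\textbf{Sufficiency.} Your argument rests on a false step: having every element order in $\{1,2,3,4,6\}$ does not make the exponent divide $4$ or $6$. The groups $S_4$, $\operatorname{SL}(2,3)$ and $\mathbb{Z}_3\rtimes\mathbb{Z}_4$ satisfy the hypothesis and have exponent $12$. The Galois group $\mathbb{Z}_{12}^{*}$ of $\mathbb{Q}(\zeta_{12})$ is not generated by complex conjugation, and $\mathbb{Q}(\zeta_{12})$ contains the real irrational $\sqrt{3}$. So ``real and in $\mathbb{Q}(\zeta_e)$'' does not give rationality; the paper's non-class function on $\mathbb{Z}_3\rtimes\mathbb{Z}_4$ does produce the real eigenvalues $\pm2\sqrt{3}$. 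The hypothesis must be used element by element, which is what the paper does through Lemma~\ref{Cay-col-graph-integral-iff-cond-corollary}. If $\gcd(k,|G|)=1$, then $k\equiv\pm1\pmod{o(h)}$ for every $h\in G$, so $h^k\in\{h,h^{-1}\}$ and $f(h^k)=f(h)$. Since $h\mapsto h^k$ permutes $G$, the automorphism $\sigma_k$ with $\zeta_n\mapsto\zeta_n^k$ satisfies $\sigma_k(\lambda_\chi)=\frac{1}{\chi(1)}\sum_{h\in G}f(h^k)\chi(h^k)=\lambda_\chi$. With this repair your Schur set-up goes through, and the Hermitian/realness observation is no longer needed.

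\textbf{Necessity.} You leave this direction open, and it cannot be closed: the ``only if'' direction of the statement is false. The $S_5$ obstruction you noticed is an actual counterexample. Every $f\in\mathfrak{F}$ is constant on each set $C\cup C^{-1}$, where $C$ is a conjugacy class. So $\mathfrak{F}$ is the $\mathbb{Z}$-span of the indicators $\delta_{C\cup C^{-1}}$, and the eigenvalues $\lambda_\chi(f)=\frac{1}{\chi(1)}\sum_{g}f(g)\chi(g)$ are linear in $f$. Hence $G$ is $\mathfrak{F}$-$\operatorname{\mathfrak{C}CI}$ exactly when every normal Cayley graph on $G$ is integral. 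In other words, the $\mathfrak{F}$-$\operatorname{\mathfrak{C}CI}$ groups are precisely the $\operatorname{NCI}$ (inverse semi-rational) groups. Two examples follow:
\begin{itemize}
\item $S_5$ is rational, hence $\mathfrak{F}$-$\operatorname{\mathfrak{C}CI}$, yet it has elements of order $5$.
\item $Q_8\times\mathbb{Z}_3$ is shown $\operatorname{NCI}$ in the paper, hence $\mathfrak{F}$-$\operatorname{\mathfrak{C}CI}$, yet it has elements of order $12$.
\end{itemize}

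The paper's own necessity proof reaches $\langle x\rangle$ through the subgroup-closure Lemma~\ref{subgroup-of-F-CCI-is-CCI-lemma}, and that lemma is false: $S_5$ is $\mathfrak{F}$-$\operatorname{\mathfrak{C}CI}$ but $\mathbb{Z}_5$ is not. Its computation silently replaces $\sum_{x\in G}$ by $\sum_{x\in H}$, i.e.\ it assumes $\Psi|_H=|H|\psi$. That fails whenever $G$ fuses $H$-classes. In $S_5$ all generators of a $5$-cycle subgroup are conjugate, so $\Psi$ cannot separate $x^{\pm1}$ from $x^{\pm2}$.

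Your cyclic witness $\delta_{\{g,g^{-1}\}}$ is admissible only when it is a class function of $G$ (e.g.\ $G$ abelian), or in the $\operatorname{\mathfrak{C}CI}$ setting where arbitrary symmetric $f$ are allowed. For the statement as written, only the ``if'' direction is provable.
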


Section \ref{nci_section} is devoted to characterizing NCI groups, investigate its subgroup and quotient properties and provide necessary condition for nilpotent $\operatorname{NCI}$ groups. Moreover, by the following theorem, we provide an alternative characterization of inverse semi-rational groups \cite{chillag2010semi}.

\begin{theorem}\label{NCI-equivalent-theorem}
Let $G$ be a group. Then the following are equivalent.
\begin{enumerate}
\item $G$ is $\operatorname{NCI}$.
\item $G$ is inverse semi-rational.
\item $\chi(g)+\chi(g^{-1})$ is an integer, for every $\chi\in\operatorname{Irr}(G)$ and $g\in G$.
\end{enumerate}
\end{theorem}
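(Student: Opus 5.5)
We prove the three equivalences with the standard spectral description of normal Cayley graphs, and lean on the character-theoretic definition of inverse semi-rationality from \cite{chillag2010semi}.

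\textbf{Spectrum of a normal Cayley graph.} Let $S$ be normal and inverse-closed. Then $S$ is a union of classes $C\cup C^{-1}$, and its indicator $\mathbb{1}_S$ is a central element of the group algebra. By the Babai--Diaconis--Shahshahani formula, the eigenvalues of $\operatorname{Cay}(G,S)$ are
\[
\lambda_\chi=\frac{1}{\chi(1)}\sum_{s\in S}\chi(s),\qquad \chi\in\operatorname{Irr}(G),
\]
with multiplicity $\chi(1)^2$. Write $S$ as a disjoint union of sets $C_i\cup C_i^{-1}$, where the $C_i$ are conjugacy classes. The quantity $\omega_\chi(\hat C)=|C|\chi(g)/\chi(1)$ is an algebraic integer. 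Hence each $\lambda_\chi$ is an algebraic integer, and it is rational if and only if it is an integer. The whole argument reduces to when these central character values are rational.

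\textbf{(3)$\Rightarrow$(1).} Each $\lambda_\chi$ is a sum of terms of two kinds. For a real class ($C=C^{-1}$) the term is $|C|\chi(g)/\chi(1)$. For a non-real class the term is $|C|(\chi(g)+\chi(g^{-1}))/\chi(1)$. In the real case, $\chi(g)=\chi(g^{-1})$, so (3) gives $2\chi(g)\in\mathbb{Z}$ and hence $\chi(g)\in\mathbb{Q}$. In both cases the term is rational, so $\lambda_\chi\in\mathbb{Q}$. Being an algebraic integer, $\lambda_\chi\in\mathbb{Z}$.

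\textbf{(1)$\Rightarrow$(3).} Take $S=C\cup C^{-1}$ for $C$ the class of $g\neq1$; for $g=1$ the statement is trivial. Integrality of $\lambda_\chi$ for every $\chi$ gives
\[
\chi(g)+\chi(g^{-1})\in \tfrac{\chi(1)}{|C|}\mathbb{Z}\subseteq\mathbb{Q}, \quad\text{or } \tfrac{2\chi(1)}{|C|}\chi(g)\in\mathbb{Z} \text{ in the real case}.
\]
Either way $\chi(g)+\chi(g^{-1})$ is rational. It is also an algebraic integer, since character values are sums of roots of unity. Hence it is an integer.

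\textbf{(2)$\Leftrightarrow$(3).} Recall that $G$ is inverse semi-rational if for every $g$ and every $m$ coprime to $o(g)$, $g^m$ is conjugate to $g$ or to $g^{-1}$. Set $\psi_\chi(g)=\chi(g)+\chi(g^{-1})=2\operatorname{Re}\chi(g)$; this lies in $\mathbb{Q}(\zeta_n)$ with $n=o(g)$. For $\sigma_m\in\operatorname{Gal}(\mathbb{Q}(\zeta_n)/\mathbb{Q})$ we have $\sigma_m\chi(g)=\chi(g^m)$.
\begin{itemize}
\item[(2)$\Rightarrow$(3):] $\sigma_m\psi_\chi(g)$ equals $\psi_\chi(g^m)$, and this equals $\psi_\chi(g)$ whether $g^m\sim g$ or $g^m\sim g^{-1}$. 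So $\psi_\chi(g)$ is Galois-fixed, hence rational, and by integrality an integer.
\item[(3)$\Rightarrow$(2):] Rationality gives $\psi_\chi(g^m)=\psi_\chi(g)$ for all $\chi$. Let $\delta_K$ denote the indicator of a class $K$, and let $K_g$ be the class of $g$. The class function $\delta_{K_g}+\delta_{K_g^{-1}}$ is a combination of characters with coefficients $\overline{\chi(g)}+\overline{\chi(g^{-1})}=\psi_{\chi}(g)$ (up to a constant factor). By column orthogonality it therefore takes equal values at $g$ and $g^m$. This value is nonzero at $g$, so $g^m\in K_g\cup K_g^{-1}$.
\end{itemize}

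The main obstacle is this last separation step. One must check carefully that the class functions of the form $\delta_K+\delta_{K^{-1}}$ are spanned by the $\psi_\chi$-type combinations. This follows by expanding
\[
\delta_{K_g}=\frac{|K_g|}{|G|}\sum_{\chi\in\operatorname{Irr}(G)}\overline{\chi(g)}\,\chi
\]
and symmetrizing under $g\mapsto g^{-1}$.
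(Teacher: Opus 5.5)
Your proof is correct, but for the key implication it takes a different route from the paper. The paper proves (1)$\Rightarrow$(2)$\Rightarrow$(3)$\Rightarrow$(1). Its (2)$\Rightarrow$(3) and (3)$\Rightarrow$(1) are exactly your Galois-invariance and eigenvalue arguments. Your explicit handling of real classes via $2\chi(g)\in\mathbb{Z}$ even fills in what the paper compresses into ``by hypothesis $\chi(S)\in\mathbb{Z}$''. The paper's (1)$\Rightarrow$(2), however, is a one-line appeal to the Godsil--Spiga theorem (a normal Cayley graph is integral iff its connection set is Eulerian), applied to $S=C_g\cup C_{g^{-1}}$.

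You instead read (1)$\Rightarrow$(3) directly off the eigenvalue $\frac{|C|}{\chi(1)}\bigl(\chi(g)+\chi(g^{-1})\bigr)$ of $\operatorname{Cay}(G,C\cup C^{-1})$. You then prove (3)$\Rightarrow$(2) by expanding $\delta_{K_g}+\delta_{K_g^{-1}}$ in irreducible characters, which amounts to reproving the special case of Godsil--Spiga that is actually needed. The paper's route is shorter but rests on an external theorem. Yours is self-contained, uses only the orthogonality relations and the Galois action on character values, and gives the direct implications (1)$\Rightarrow$(3) and (3)$\Rightarrow$(2) as by-products.

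There are two places to tidy.

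First, in the real case of (1)$\Rightarrow$(3), what you obtain is $\frac{|C|}{\chi(1)}\chi(g)\in\mathbb{Z}$. Your displayed condition is garbled, though rationality of $\chi(g)+\chi(g^{-1})$ follows either way.

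Second, the sentence ``by column orthogonality it therefore takes equal values at $g$ and $g^m$'' is the crux of (3)$\Rightarrow$(2) and should be spelled out. The quickest way: under (3), $F=\delta_{K_g}+\delta_{K_g^{-1}}=\frac{|K_g|}{|G|}\sum_{\chi}\psi_\chi(g)\chi$ has rational coefficients. Hence $F(g^m)=\sigma_m\bigl(F(g)\bigr)=F(g)\in\{1,2\}$, so $g^m\in K_g\cup K_g^{-1}$. Alternatively, substitute $\psi_\chi(g^m)$ for $\psi_\chi(g)$ in the coefficients and apply column orthogonality at $g^m$. Using $C_G(g^m)=C_G(g)$, this gives $F(g^m)\geq 1$.
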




\section{Preliminaries and notations}\label{prelims-notations}

For a group $G$, we denote by $\operatorname{Irr}(G)$, the set of all inequivalent irreducible (complex) characters of $G$. A complex number $\alpha$ is said to be an \textit{algebraic integer} if $\alpha$ is a root of a monic polynomial in $\mathbb{Z}[x]$. The set of all algebraic integers, denoted by $\mathbb{A}$, forms a subring of $\mathbb{C}$. Let $\zeta_n=e^{\frac{2\pi i}{n}}$ denote the primitive $n$-th root of unity.

\begin{lemma}(\cite{isaacs1994character})\label{chi-algebraic-integer-lemma}
Let $G$ be a finite group and $\chi$ be a character of $G$. Then $\chi(g)$ is an algebraic integer for all $g\in G$. Moreover, if $|G|=n$, then $\chi(g)\in\mathbb{Q}(\zeta_n)$.
\end{lemma}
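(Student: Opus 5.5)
The statement to prove is Lemma~\ref{chi-algebraic-integer-lemma}: every value $\chi(g)$ of a character of $G$ is an algebraic integer lying in $\mathbb{Q}(\zeta_n)$, where $n=|G|$. The plan is to diagonalize a representation affording $\chi$ at the single element $g$. Then $\chi(g)$ becomes a sum of roots of unity, and both claims follow from the closure properties of algebraic integers.

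\textbf{Step 1 (diagonalization).} Let $\rho:G\to GL_d(\mathbb{C})$ be a representation affording $\chi$, fix $g\in G$, and let $m=o(g)$. By Lagrange, $m\mid n$. Since $g^m=1$, we have $\rho(g)^m=I$, so the minimal polynomial of $\rho(g)$ divides $x^m-1$. This polynomial has distinct roots over $\mathbb{C}$, so $\rho(g)$ is diagonalizable. Its eigenvalues $\varepsilon_1,\dots,\varepsilon_d$ satisfy $\varepsilon_i^m=1$, so each is a power of $\zeta_m$. Taking the trace gives
\[
\chi(g)=\operatorname{tr}\rho(g)=\varepsilon_1+\cdots+\varepsilon_d .
\]

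\textbf{Step 2 (algebraic integrality).} Each $\varepsilon_i$ is a root of the monic integer polynomial $x^m-1$, so $\varepsilon_i\in\mathbb{A}$. The preliminaries state that $\mathbb{A}$ is a subring of $\mathbb{C}$, so it is closed under addition, and therefore $\chi(g)\in\mathbb{A}$.

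\textbf{Step 3 (the field).} Because $m\mid n$, we have $\zeta_m=\zeta_n^{n/m}\in\mathbb{Q}(\zeta_n)$. Hence every $\varepsilon_i$, and so their sum $\chi(g)$, lies in $\mathbb{Q}(\zeta_n)$.

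There is no real obstacle here. The one point needing care is the diagonalizability of $\rho(g)$, which comes from its minimal polynomial being separable. Alternatively, one can avoid diagonalization: $\chi(g)$ is the sum of the roots of the characteristic polynomial of $\rho(g)$, counted with multiplicity, and each such root satisfies $x^m=1$. That already suffices for both claims.
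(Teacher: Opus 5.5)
Your proof is correct and is essentially the standard argument from Isaacs, which the paper cites rather than reproves. You diagonalize $\rho(g)$ so that $\chi(g)$ is a sum of $o(g)$-th roots of unity, use that $\mathbb{A}$ is a ring for integrality, and use $\zeta_{o(g)}=\zeta_n^{n/o(g)}\in\mathbb{Q}(\zeta_n)$ for the field statement.
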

For any $S\subseteq G$ and $\chi\in\operatorname{Irr}(G)$, define
$$\chi(S)\coloneqq\sum_{s\in S}\chi(s).$$
For $g\in G$, the \textit{atom} \cite{alperin2012integral} of $g$, denoted by $\operatorname{Atom}(g)$ is defined as $\lbrace{x\in G~|~\langle{x}\rangle=\langle{g}\rangle}\rbrace$, i.e., $\operatorname{Atom}(g)=\lbrace{g^k~|~\operatorname{gcd}(k,o(g))=1}\rbrace$. A set $S\subseteq G$ is said to be \textit{Eulerian} \cite{guo2019integral} if for any $s\in S$, $\operatorname{Atom}(s)\subseteq S$. In other words, a set $S\subseteq G$ is Eulerian if $S$ is a (disjoint) union of some atoms in $G$. The following result is due to Godsil and Spiga \cite{godsil2025integral}, which provides a necessary and sufficient condition for the integrality of a normal Cayley graph.

\begin{theorem}(\cite{godsil2025integral}, Theorem $1.1$)\label{Normal-Eulerian-iff-integral-theorem}
Let $\Gamma=\operatorname{Cay}(G,S)$ be a normal Cayley graph. Then $\Gamma$ is integral if and only if $S$ is Eulerian.
\end{theorem}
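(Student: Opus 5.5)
The approach is to diagonalise the adjacency matrix through the irreducible characters of $G$, and then turn integrality into invariance under the Galois group of $\mathbb{Q}(\zeta_n)$, where $n=|G|$.

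\textbf{Step 1 (the spectrum).} Let $A$ be the adjacency matrix of $\operatorname{Cay}(G,S)$. Then $A$ is the matrix of left multiplication by the element $\hat S=\sum_{s\in S}s$ of the group algebra $\mathbb{C}G$, acting in the regular representation. Since $S$ is normal, it is a union of conjugacy classes, so $\hat S$ lies in the centre of $\mathbb{C}G$. By Schur's lemma, $\hat S$ acts on the irreducible representation affording $\chi\in\operatorname{Irr}(G)$ as a scalar. Taking traces shows this scalar is $\omega_\chi(S)=\chi(S)/\chi(1)$. The regular representation contains each irreducible $\chi$ with multiplicity $\chi(1)$. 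Hence the eigenvalues of $\operatorname{Cay}(G,S)$ are the numbers $\chi(S)/\chi(1)$, $\chi\in\operatorname{Irr}(G)$, each with multiplicity $\chi(1)^2$.

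\textbf{Step 2 (integral $\iff$ rational).} Each $\omega_\chi(S)$ is a sum of central character values $\omega_\chi(\hat K)$ over conjugacy classes $K\subseteq S$, and these are algebraic integers. Alternatively, each $\omega_\chi(S)$ is an eigenvalue of the integer matrix $A$, hence a root of its monic characteristic polynomial. Either way $\omega_\chi(S)\in\mathbb{A}$. By Lemma \ref{chi-algebraic-integer-lemma} we also have $\omega_\chi(S)\in\mathbb{Q}(\zeta_n)$. A rational algebraic integer is an integer, so $\Gamma$ is integral if and only if every $\chi(S)$ is fixed by every $\sigma_k\in\operatorname{Gal}(\mathbb{Q}(\zeta_n)/\mathbb{Q})$, where $\sigma_k(\zeta_n)=\zeta_n^k$ and $\gcd(k,n)=1$.

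\textbf{Step 3 (Galois action).} Evaluating $\chi(s)$ as a sum of eigenvalues, which are $n$-th roots of unity, gives $\sigma_k(\chi(s))=\chi(s^k)$. Therefore $\sigma_k(\chi(S))=\chi(S^{(k)})$, where $S^{(k)}=\{s^k\mid s\in S\}$. For $\gcd(k,n)=1$ the map $x\mapsto x^k$ is a bijection of $G$ that commutes with conjugation. Consequently $S^{(k)}$ is again normal and $|S^{(k)}|=|S|$.

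\textbf{Step 4 (Eulerian $\Rightarrow$ integral).} If $S$ is Eulerian, then $S^{(k)}\subseteq S$ for every $k$ coprime to $n$. Equality of cardinalities gives $S^{(k)}=S$. So every $\chi(S)$ is Galois-fixed, hence rational, and $\Gamma$ is integral by Step 2.

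\textbf{Step 5 (integral $\Rightarrow$ Eulerian).} This is the step needing the most care. If $\Gamma$ is integral, Step 3 gives $\chi(S)=\chi(S^{(k)})$ for all $\chi\in\operatorname{Irr}(G)$. Since $S$ and $S^{(k)}$ are normal, their indicator functions $1_S$ and $1_{S^{(k)}}$ are class functions. Their inner products with $\overline{\chi}$ are $\chi(S)/|G|$ and $\chi(S^{(k)})/|G|$ respectively. By orthonormality of $\operatorname{Irr}(G)$ as a basis of $Z(L(G))$, the two indicator functions coincide, so $S^{(k)}=S$ for all $k$ coprime to $n$. Given $s\in S$ and $g=s^j$ with $\gcd(j,o(s))=1$, choose $k\equiv j\pmod{o(s)}$ with $\gcd(k,n)=1$; this is possible by Dirichlet's theorem or by an elementary CRT argument. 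Then $g=s^k\in S$, so $\operatorname{Atom}(s)\subseteq S$ and $S$ is Eulerian.
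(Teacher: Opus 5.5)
Your proof is correct and complete. The one step you only sketch is choosing $k\equiv j \pmod{o(s)}$ with $\gcd(k,n)=1$. That is the standard surjectivity of $\mathbb{Z}_n^{*}\to\mathbb{Z}_{o(s)}^{*}$, and it is fine as stated.

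The paper gives no proof of this statement. It quotes the result from Godsil and Spiga and uses it as a black box in the step (1)$\Rightarrow$(2) of Theorem \ref{NCI-equivalent-theorem}. Compared with the tools the paper does quote, your argument is essentially Lemma \ref{Cay-col-graph-integral-iff-cond-corollary} applied to the class function $f=\delta_S$, whose colour graph $\Gamma_{\delta_S}$ has the same adjacency matrix as $\operatorname{Cay}(G,S)$.

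That lemma says $\Gamma_{\delta_S}$ is integral iff $\delta_S(g^h)=\delta_S(g)$ for all $g\in G$ and $h\in\mathbb{Z}_n^{*}$. This is equivalent to $S^{(h)}=S$ for all such $h$. Your Steps 3--5 prove exactly this special case:
\begin{itemize}
\item Galois conjugation of $\lambda_\chi$ replaces $S$ by $S^{(k)}$, as in Lemma \ref{sigma-eta-chi-lemma}.
\item Orthonormality of $\operatorname{Irr}(G)$ among class functions upgrades ``$\chi(S)=\chi(S^{(k)})$ for all $\chi$'' to $S=S^{(k)}$.
\end{itemize}
The only additional step is passing from ``$S^{(k)}=S$ for all $k\in\mathbb{Z}_n^{*}$'' to ``$S$ is a union of atoms'', and you do this correctly.

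Your route is self-contained, using only Schur's lemma, the Galois action on character values and the orthogonality relations. It also never uses $S=S^{-1}$, so it proves the same characterization for normal Cayley digraphs. There, inverse-closure of an integral connection set comes for free, since $s^{-1}=s^{o(s)-1}\in\operatorname{Atom}(s)$.
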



An element $g\in G$ is said to be \textit{rational} or \textit{rational-valued} if $\chi(g)\in\mathbb{Q}$ for all $\chi\in\operatorname{Irr}(G)$. By Lemma \ref{chi-algebraic-integer-lemma}, this implies that $\chi(g)\in\mathbb{Z}$. A group $G$ is said to be a \textit{rational group} (or a \textit{$\mathbb{Q}$-group}) if every element of $G$ is rational, i.e., if its character table is an integer matrix. It can be shown that a group $G$ is rational if and only if for every $g\in G$, $\operatorname{Atom}(g)\subseteq C_g$ [\cite{huppert2013endliche}, Chapter V, Theorem $13.7~(b)$].

The notion of semi-rational and inverse semi-rational groups were introduced by Chillag and Dolfi \cite{chillag2010semi} as a generalization of rational groups. Let $G$ be a group and let $m$ be its exponent. An element $g\in G$ is said to be \textit{semi-rational} if there exists $r_g\in\mathbb{Z}_{m}^*$, depending on $g$, such that $\operatorname{Atom}(g)\subseteq C_g\cup C_{g^{r_g}}$. An element $g\in G$ is said to be \textit{inverse semi-rational} if $\operatorname{Atom}(g)\subseteq C_g\cup C_{g^{-1}}$. A group $G$ is said to be \textit{semi-rational} (resp. \textit{inverse semi-rational}) if every element of $G$ is semi-rational (resp. inverse semi-rational). 

Inverse semi-rational groups are also known in the literature as ``cut groups'', where cut is an acronym for ``\textbf{c}entral \textbf{u}nits are \textbf{t}rivial'', since they are precisely the finite groups for which all the central units of their integral group ring are trivial \cite{bachle2018integral,ritter1990integral}. By definition, every inverse semi-rational group is semi-rational but not conversely. For example, $D_8$, the dihedral group of order $16$ is semi-rational but not inverse semi-rational.
The authors in \cite{chillag2010semi} provided a necessary condition for a solvable$/$supersolvable group $G$ to be semi-rational$/$inverse semi-rational, in terms of the possible prime factors of $|G|$ (Theorem $2$, Proposition $6$). They also showed that a group of odd order is semi-rational if and only if it is inverse semi-rational (Remark $13$) and characterized semi-rational groups of odd order (Theorem $3$). Note that, Theorem \ref{NCI-equivalent-theorem} essentially enables us to use the terms ``$\operatorname{NCI}$'' and ``inverse semi-rational'' interchangeably.

\medskip

We prove an elementary result in this context.

\begin{lemma}\label{semi-rational-NCI-rational-center-closed-lemma}
Let $G$ be a group. Then the following results hold.
\begin{enumerate}
\item If $G$ is semi-rational, so is $Z(G)$. 
\item If $G$ is inverse semi-rational, so is $Z(G)$.
\item If $G$ is rational, so is $Z(G)$.
\end{enumerate}
\end{lemma}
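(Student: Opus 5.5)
The plan is to exploit one fact: for an element $z\in Z(G)$, the conjugacy class of $z$ in $G$ is the singleton $\{z\}$. Since $Z(G)$ is abelian, conjugacy classes in $Z(G)$ are singletons as well. So for $z\in Z(G)$, write $C_z$ for the class of $z$ in $G$ and $C^{Z}_z$ for its class in $Z(G)$; then $C_z=\{z\}=C^{Z}_z$. Moreover $\operatorname{Atom}(z)=\{z^k\mid \gcd(k,o(z))=1\}$ depends only on the cyclic subgroup $\langle z\rangle\le Z(G)$. It is therefore the same set whether computed in $G$ or in $Z(G)$. Each of the three properties is a condition of the form ``$\operatorname{Atom}(g)$ is contained in a union of certain conjugacy classes''. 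Such a condition, applied to central elements, passes unchanged to $Z(G)$.

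For (2), assume $G$ is inverse semi-rational and let $z\in Z(G)$. Then $\operatorname{Atom}(z)\subseteq C_z\cup C_{z^{-1}}=\{z,z^{-1}\}=C^Z_z\cup C^Z_{z^{-1}}$, and $z^{-1}\in Z(G)$. Hence $z$ is inverse semi-rational in $Z(G)$. For (3), we use the quoted criterion [\cite{huppert2013endliche}, Chapter V, Theorem $13.7~(b)$]: $G$ rational gives $\operatorname{Atom}(z)\subseteq C_z=\{z\}=C^Z_z$, so $Z(G)$ is rational by the same criterion. (This in fact forces $\operatorname{Atom}(z)=\{z\}$, so $z^2=1$; that is consistent, but we do not need it.)

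For (1), the only point requiring care is the exponent in the definition of semi-rationality. Let $m=\exp(G)$ and $m'=\exp(Z(G))$, so $m'\mid m$. If $z\in Z(G)$ is semi-rational in $G$ with $r=r_z\in\mathbb{Z}_m^*$, then $\operatorname{Atom}(z)\subseteq\{z,z^{r}\}$. Let $r'$ be the reduction of $r$ modulo $m'$. Since $\gcd(r,m)=1$ and $m'\mid m$, we get $r'\in\mathbb{Z}_{m'}^*$. Since $o(z)\mid m'$, we also have $z^{r'}=z^r$. Thus $\operatorname{Atom}(z)\subseteq C^Z_z\cup C^Z_{z^{r'}}$, and $z$ is semi-rational in $Z(G)$. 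I expect this compatibility of the unit $r_z$ with the smaller exponent to be the only mildly delicate step; the other two parts are immediate from the singleton conjugacy classes.
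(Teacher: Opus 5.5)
Your proof is correct and is essentially the paper's argument: a central element has a singleton conjugacy class both in $G$ and in $Z(G)$, and $\operatorname{Atom}(z)$ depends only on $\langle z\rangle$, so each atom-containment condition passes unchanged to $Z(G)$. In part (1) you also reduce $r_z$ from $\mathbb{Z}_m^*$ to $\mathbb{Z}_{m'}^*$ (with $m'$ the exponent of $Z(G)$), which the paper leaves implicit, and you do it correctly.
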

\pf Let $G$ be semi-rational and let $x\in Z(G)$. Then there exists an integer $r_x$, depending on $x$, such that $\operatorname{Atom}(x)\subseteq C_x\cup C_{x^{r_x}}$. Since the conjugacy classes in $G$ and the conjugacy classes in $Z(G)$ are same, $Z(G)$ is semi-rational. This proves $(1)$. Part $(2)$ and $(3)$ follows in a similar manner.\qed

\begin{remark}\label{Q-ISR-SR-not-subgroup-closed-remark}
Subgroup of a semi-rational, inverse semi-rational and rational group may not be semi-rational, inverse semi-rational and rational, respectively. For example, $S_5$ is rational, inverse semi-rational and semi-rational but $\mathbb{Z}_5$ is none of them.
\end{remark}

At this juncture, we provide a more detailed pictorial representation of Figure \ref{chain-diagram} (Figure \ref{venn_diagram}), which will help us visualize the hierarchy better.

\begin{figure}[h]
    \centering
    \begin{tikzpicture}[scale=1.45, very thick]

\draw[thick] (-1,0) rectangle (9,6);
\draw[fill=Cyan4, opacity=0.15, thick] (-1,0) rectangle (9,6);

\def\points{}  
\foreach \a in {0,10,...,350}{
  \pgfmathsetmacro{\rx}{3.4}
  \pgfmathsetmacro{\ry}{2.4}
  \pgfmathsetmacro{\cosang}{cos(\a)}
  \pgfmathsetmacro{\sinang}{sin(\a)}
  \pgfmathsetmacro{\ellipserad}{1/sqrt( (\cosang/\rx)^2 + (\sinang/\ry)^2 )}
  \pgfmathsetmacro{\r}{\ellipserad + rand*0.3}
  \ifdim\r pt<0.2pt \pgfmathsetmacro{\r}{0.2}\fi
  \pgfmathsetmacro{\x}{4 + \r*cos(\a)}
  \pgfmathsetmacro{\y}{3 + \r*sin(\a)}
  \xdef\points{\points (\x,\y)}
}


\draw[fill=cyan, opacity=0.2, very thick] (4,3) ellipse [x radius=3.5, y radius=2.1];
\draw[thick] (4,3) ellipse [x radius=3.5, y radius=2.1];
\draw[fill=gray, opacity=0.2, very thick] (3,3) ellipse [x radius=2.1, y radius=1.5];
\draw[thick] (3,3) ellipse [x radius=2.1, y radius=1.5];
\draw[fill=violet, opacity=0.2, very thick] (3.4,3) circle (1.2);
\draw[thick] (3.4,3) circle (1.2);
\draw[fill=blue, opacity=0.2, very thick] (3.4,3) circle (0.6);
\draw[thick] (3.4,3) circle (0.6);
\draw[fill=yellow, opacity=0.2, very thick] (5.3,3) ellipse [x radius=1.8, y radius=1.1];
\draw[thick] (5.3,3) ellipse [x radius=1.8, y radius=1.1];
\draw[fill=magenta, opacity=0.15, very thick] (4,2.8) ellipse [x radius=4.6, y radius=2.65];
\draw[thick] (4,2.8) ellipse [x radius=4.6, y radius=2.65];

\node[] at (5.9,3) {\bf \textcolor{blue}{$\mathbb{Q}$-group}};
\node[] at (3.15,3) {\bf \textcolor{blue}{CCI}};
\node[] at (2.5,3) {\bf \textcolor{blue}{CI}};
\node[] at (5.5,4.5) {\bf \textcolor{blue}{NCI}};
\node[] at (1.5,3) {\bf \textcolor{blue}{$\mathfrak{F}$-CCI}};
\node[] at (4,0.6) {\bf \textcolor{blue}{Semi-rational}};

\node[] at (4.29,3) { \textcolor{black}{$S_3$}};
\node[] at (4.85,3) { \textcolor{black}{$D_4$}};
\node[] at (7.8,5) { \textcolor{black}{$D_7$}};
\node[] at (8,3) { \textcolor{black}{$D_8$}};
\node[] at (5.8,3.6) { \textcolor{black}{$S_5$}};
\node[] at (3.8,3) { \textcolor{black}{$Q_8$}};
\node[] at (3.3,3.8) { \textcolor{black}{$\mathbb{Z}_3\rtimes \mathbb{Z}_4$}};
\node[] at (3.4,4.7) { \textcolor{black}{$Q_8\times\mathbb{Z}_3$}};
\node[] at (3.3,3.3) { \textcolor{black}{$\mathbb{Z}_3$}};
\node[] at (1.7,3.5) { \textcolor{black}{$A_4$}};
\end{tikzpicture}
    \caption{Pictorial representation of the hierarchy between the groups.}
    \label{venn_diagram}
\end{figure}
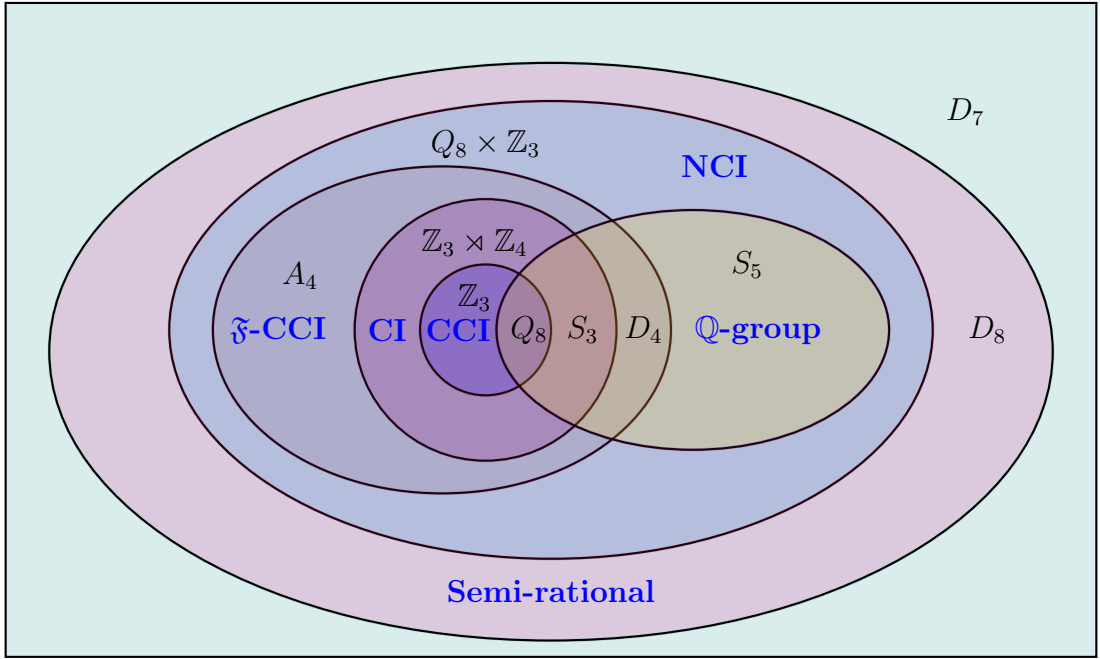

\medskip 

If $f:G\to\mathbb{C}$ is a class function, the spectrum of the Cayley colour graph $\Gamma_f$ can be obtained as following.

\begin{lemma}(\cite{foster2016spectra}, Theorem $4.3$)\label{spectrum-Cay-col-digraph}
Let $f:G\to\mathbb{C}$ be a class function. Then the spectrum of the Cayley colour digraph $\operatorname{Cay}(G,f)$ is given by
$$\lambda_\chi=\frac{1}{\chi(1)}\sum_{g\in G}f(g)\chi(g),$$
where $\chi\in\operatorname{Irr}(G)$ with multiplicity of $\lambda_\chi$ being $\chi(1)^2$.
\end{lemma}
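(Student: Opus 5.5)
The statement immediately above is Lemma \ref{spectrum-Cay-col-digraph}, which is cited from \cite{foster2016spectra}. To prove it I would work in the group algebra and use the regular representation. The adjacency matrix $A=[f(gh^{-1})]_{g,h\in G}$ is the matrix of the operator on $L(G)=\mathbb{C}^G$ given by $(A\phi)(g)=\sum_{h}f(gh^{-1})\phi(h)$. Substituting $x=gh^{-1}$ gives $(A\phi)(g)=\sum_{x\in G}f(x)\phi(x^{-1}g)$, so $A=\sum_{x\in G}f(x)\rho(x)$, where $\rho$ is the left regular representation $(\rho(x)\phi)(g)=\phi(x^{-1}g)$.

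First, decompose $\rho\cong\bigoplus_{\chi\in\operatorname{Irr}(G)}\chi(1)\,\rho_\chi$, where $\rho_\chi$ is an irreducible representation affording $\chi$. After a change of basis, $A$ becomes block diagonal. It contains $\chi(1)$ copies of the $\chi(1)\times\chi(1)$ block $A_\chi=\sum_x f(x)\rho_\chi(x)$ for each $\chi$.

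Second, show that each $A_\chi$ is a scalar matrix. Since $f$ is a class function, $\rho_\chi(y)A_\chi\rho_\chi(y)^{-1}=\sum_x f(x)\rho_\chi(yxy^{-1})=A_\chi$ for every $y\in G$. By Schur's lemma, $A_\chi=\lambda_\chi I_{\chi(1)}$. Taking traces gives $\chi(1)\lambda_\chi=\sum_x f(x)\chi(x)$, which is the formula in the lemma.

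Third, count multiplicities. Each $\chi$ contributes $\chi(1)$ blocks of size $\chi(1)$, each equal to $\lambda_\chi I$, so the eigenvalue $\lambda_\chi$ arises with multiplicity $\chi(1)^2$. This accounts for all $\sum_\chi\chi(1)^2=|G|$ eigenvalues. If $\lambda_\chi=\lambda_\psi$ for distinct $\chi,\psi$, the multiplicities simply add, which is the intended reading of the statement.

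The only delicate point is the bookkeeping in the first step. One must check that $A$ really equals $\sum_x f(x)\rho(x)$ for the left regular representation, rather than for its transpose or the right regular representation. I would verify this directly on basis vectors, using $\delta_h\mapsto\sum_g f(gh^{-1})\delta_g$ together with $\rho(x)\delta_h=\delta_{xh}$. In any case, the right regular representation has the same decomposition, so the spectrum does not depend on which convention is used.
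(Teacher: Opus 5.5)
Your proof is correct: the identification $A=\sum_{x\in G}f(x)\rho(x)$ with the left regular representation checks out on basis vectors, and Schur's lemma followed by a trace computation gives exactly $\lambda_\chi=\frac{1}{\chi(1)}\sum_{x\in G}f(x)\chi(x)$ with multiplicity $\chi(1)^2$. The paper gives no proof of its own here and simply imports the result from \cite{foster2016spectra}; your argument is the standard one behind that citation, namely that the central element $\sum_{x\in G}f(x)x$ of the group algebra acts by a scalar on each isotypic component of the regular representation.
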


Let $f:G\to\mathbb{C}$ be a function. For $k\in\mathbb{Z}$, we define $f^k:G\to\mathbb{C}$ as $f^k(g)=f(g^k)$ for all $g\in G$. We recall the following result from \cite{poddar2026algebraic} which will be in use later.

\begin{lemma}(\cite{poddar2026algebraic}, Corollary $3.6$)\label{Cay-col-graph-integral-iff-cond-corollary}
Let $G$ be a group of order $n$ and let $f:G\to\mathbb{Z}$ be a class function. Then the Cayley colour graph $\Gamma_f=\operatorname{Cay}(G,f)$ is integral if and only if $f^h=f$ for all $h\in\mathbb{Z}_n^{*}$. 
\end{lemma}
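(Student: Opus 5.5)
The plan is to use Lemma \ref{spectrum-Cay-col-digraph} together with the Galois theory of the cyclotomic field $\mathbb{Q}(\zeta_n)$, where $n=|G|$. By that lemma, the eigenvalues of $\Gamma_f$ are
$$\lambda_\chi(f)=\frac{1}{\chi(1)}\sum_{g\in G}f(g)\chi(g),\qquad \chi\in\operatorname{Irr}(G).$$

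First I will show that each $\lambda_\chi(f)$ is an algebraic integer lying in $\mathbb{Q}(\zeta_n)$. Group the sum by conjugacy classes $C_1,\dots,C_r$ with representatives $g_i$. This gives $\lambda_\chi(f)=\sum_i f(g_i)\,\omega_\chi(\hat C_i)$, where $\omega_\chi(\hat C_i)=|C_i|\chi(g_i)/\chi(1)$ is the value of the central character. This value is a standard algebraic integer. Since $f(g_i)\in\mathbb{Z}$, $\lambda_\chi(f)\in\mathbb{A}$. By Lemma \ref{chi-algebraic-integer-lemma} it also lies in $\mathbb{Q}(\zeta_n)$. Consequently $\lambda_\chi(f)\in\mathbb{Z}$ if and only if $\lambda_\chi(f)\in\mathbb{Q}$. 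That in turn holds if and only if $\lambda_\chi(f)$ is fixed by every $\sigma_h\in\operatorname{Gal}(\mathbb{Q}(\zeta_n)/\mathbb{Q})\cong\mathbb{Z}_n^*$, where $\sigma_h(\zeta_n)=\zeta_n^h$.

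Next I will compute the Galois action. Since $\chi(g)$ is a sum of $n$-th roots of unity (eigenvalues of the representing matrix), $\sigma_h(\chi(g))=\chi(g^h)$. Let $h'$ be the inverse of $h$ modulo $n$. The map $g\mapsto g^{h'}$ is a bijection of $G$, so
$$\sigma_h(\lambda_\chi(f))=\frac{1}{\chi(1)}\sum_{g}f(g)\chi(g^h)=\frac{1}{\chi(1)}\sum_{g}f(g^{h'})\chi(g)=\lambda_\chi(f^{h'}).$$
Here $f^{h'}$ is again a $\mathbb{Z}$-valued class function, because $(xgx^{-1})^{h'}=xg^{h'}x^{-1}$. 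If $f^h=f$ for all $h\in\mathbb{Z}_n^*$, then every $\lambda_\chi(f)$ is Galois-fixed, hence an integer, and $\Gamma_f$ is integral.

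For the converse, assume every $\lambda_\chi(f)$ is an integer. Then $\lambda_\chi(f^{h'})=\lambda_\chi(f)$ for all $\chi\in\operatorname{Irr}(G)$ and all $h\in\mathbb{Z}_n^*$. The key point, and the only step needing care, is that a class function $\varphi$ is determined by the tuple $(\lambda_\chi(\varphi))_\chi$. Indeed, $\chi(1)\lambda_\chi(\varphi)=|G|\langle\varphi,\overline{\chi}\rangle$. Since $\{\overline{\chi}:\chi\in\operatorname{Irr}(G)\}$ is an orthonormal basis of $Z(L(G))$, these inner products determine $\varphi$. Applying this to the class function $f^{h'}-f$ gives $f^{h'}=f$. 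As $h$ runs over $\mathbb{Z}_n^*$, so does $h'$, so $f^h=f$ for all $h\in\mathbb{Z}_n^*$.
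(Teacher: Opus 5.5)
Your proof is correct. The paper gives no proof of this lemma; it is imported from \cite{poddar2026algebraic} (Corollary 3.6), so there is no in-paper argument to compare against. There it presumably follows from a description of the splitting field of $\Gamma_f$. Your argument is a direct, self-contained proof of exactly the degree-one case. It uses the same tools the paper applies in its own Galois arguments (Theorem~\ref{NCI-equivalent-theorem}, $(2)\Rightarrow(3)$, and Proposition~\ref{NCI-iff-condition-Cay-col-proposition}): Lemma~\ref{sigma-eta-chi-lemma}, together with the fact that rational algebraic integers are integers.

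The identity $\sigma_h(\lambda_\chi(f))=\lambda_\chi(f^{h'})$, obtained from the substitution $g\mapsto g^{h'}$, gives sufficiency at once. For necessity you correctly identify the one extra ingredient: $\varphi\mapsto(\lambda_\chi(\varphi))_{\chi\in\operatorname{Irr}(G)}$ is injective on class functions, because $\{\overline{\chi}\}$ is an orthonormal basis of $Z(L(G))$. This step needs $\lambda_\chi(f^{h'})=\lambda_\chi(f)$ character by character, not merely equality of spectra as multisets. Your argument supplies exactly that, since each integer $\lambda_\chi(f)$ is individually fixed by $\sigma_h$.

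Two minor remarks. First, the central-character detour is optional: $\lambda_\chi(f)\in\mathbb{A}$ also follows because it is an eigenvalue of the integer matrix $[f(gh^{-1})]$, which is how the paper argues elsewhere. Second, you never use $f(g)=f(g^{-1})$, so the argument proves the statement for Cayley colour digraphs with class-function colouring as well.
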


It is known from Galois theory that, $\operatorname{Gal}(\mathbb{Q}(\zeta_n)/\mathbb{Q})\cong\mathbb{Z}_n^{*}$. Let $\eta:\operatorname{Gal}(\mathbb{Q}(\zeta_n)/\mathbb{Q})\to\mathbb{Z}_n^{*}$ be the isomorphism 
defined as $\eta(\sigma)=h$ for any $\sigma\in\operatorname{Gal}(\mathbb{Q}(\zeta_n)/\mathbb{Q})$, where $h\in\mathbb{Z}_n^{*}$ is the integer such that $\sigma(\zeta_n)=\zeta_n^{h}$.

\begin{lemma}(\cite{huppert2013endliche}, Chapter V, Theorem $13.1~(c)$)\label{sigma-eta-chi-lemma}
Let $G$ be a group of order $n$ and $\sigma\in\operatorname{Gal}(\mathbb{Q}(\zeta_n)/\mathbb{Q})$. Then for any character $\chi$ of $G$, $\sigma(\chi(g))=\chi(g^{\eta(\sigma)})$ for all $g\in G$.
\end{lemma}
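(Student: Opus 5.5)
The plan is to compute both sides of $\sigma(\chi(g))=\chi(g^{\eta(\sigma)})$ from the eigenvalues of a matrix representation affording $\chi$. The only Galois-theoretic input needed is how $\sigma$ acts on $n$-th roots of unity.

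First, fix a complex representation $\rho:G\to \mathrm{GL}_d(\mathbb{C})$ affording $\chi$, so that $\chi(g)=\operatorname{tr}\rho(g)$. Since $|G|=n$, we have $g^n=1$, hence $\rho(g)^n=I$. The minimal polynomial of $\rho(g)$ therefore divides $x^n-1$. This polynomial has $n$ distinct roots in $\mathbb{C}$, so $\rho(g)$ is diagonalizable, and its eigenvalues $\varepsilon_1,\dots,\varepsilon_d$ are $n$-th roots of unity. I expect this diagonalizability step to be the only point needing real care; everything else is bookkeeping.

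Next, choose a basis in which $\rho(g)=\operatorname{diag}(\varepsilon_1,\dots,\varepsilon_d)$. Put $h=\eta(\sigma)$. In the same basis $\rho(g^h)=\rho(g)^h=\operatorname{diag}(\varepsilon_1^h,\dots,\varepsilon_d^h)$. Taking traces gives
$$\chi(g)=\sum_{i=1}^d\varepsilon_i \qquad\text{and}\qquad \chi(g^h)=\sum_{i=1}^d\varepsilon_i^h .$$
In particular $\chi(g)\in\mathbb{Q}(\zeta_n)$, consistent with Lemma \ref{chi-algebraic-integer-lemma}, so $\sigma(\chi(g))$ is defined.

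Finally, apply $\sigma$. Each $\varepsilon_i$ has the form $\zeta_n^{k_i}$ for some integer $k_i$. By the definition of $\eta$ we have $\sigma(\zeta_n)=\zeta_n^{h}$, and since $\sigma$ is a field automorphism, $\sigma(\varepsilon_i)=\sigma(\zeta_n)^{k_i}=\zeta_n^{hk_i}=\varepsilon_i^h$. Using additivity of $\sigma$,
$$\sigma(\chi(g))=\sum_{i=1}^d\sigma(\varepsilon_i)=\sum_{i=1}^d\varepsilon_i^h=\chi(g^{h})=\chi(g^{\eta(\sigma)}).$$
Irreducibility of $\chi$ is never used, so the identity holds for every character of $G$, as the statement claims.
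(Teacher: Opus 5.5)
Your argument is correct and complete: it diagonalizes $\rho(g)$ (valid because its minimal polynomial divides the separable polynomial $x^n-1$), writes $\chi(g)$ as a sum of $n$-th roots of unity, and uses $\sigma(\varepsilon_i)=\varepsilon_i^{\eta(\sigma)}$, which is the standard proof of this fact. The paper gives no proof of its own and simply cites Huppert (Chapter V, Theorem 13.1(c)); to my knowledge that reference uses essentially this same eigenvalue argument.
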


\section{Cayley colour integral (CCI) groups}\label{cci_section}

In \cite{klotz2010integral}, Klotz and Sander classified the abelian Cayley integral groups. They showed that the abelian Cayley integral groups are precisely the  abelian groups of exponent dividing $4$
or $6$. In [\cite{klotz2010integral}, p.$12$, Problem $3$], they proposed the problem of characterizing all
non-abelian Cayley integral groups. Abdollahi and Jazaeri \cite{abdollahi2014groups}, and Ahmady, Bell and Mohar \cite{ahmady2014integral} independently completed the classification of such groups. They proved the following remarkable result.

\begin{theorem}\label{CI-groups-theorem}(\cite{abdollahi2014groups,ahmady2014integral})
Let $G$ be a Cayley integral group. Then $G$ is isomorphic to one of the following groups.
\begin{itemize}
\item $\mathbb{Z}_2^n\times\mathbb{Z}_3^m$, $m,n\ge 0$.
\item $\mathbb{Z}_2^n\times\mathbb{Z}_4^m$, $m,n\ge 0$.
\item The symmetric group $S_3$.
\item The dicyclic group $\operatorname{Dic}_{12}$ isomorphic to $\mathbb{Z}_3\rtimes\mathbb{Z}_4=\lbrace{a,b\mid a^6=1,a^3=b^2, bab^{-1}=a^{-1}}\rbrace$.
\item $Q_8\times\mathbb{Z}_2^n$, $n\ge 0$, where $Q_8$ is the quaternion group of order $8$.
\end{itemize}
\end{theorem}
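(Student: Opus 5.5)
The plan is to show that every Cayley integral group lies on the list by first pinning down element orders and then splitting into the abelian and non-abelian cases. The direction that the listed groups are indeed CI is a finite check via Lemma \ref{spectrum-Cay-col-digraph} and the character tables. It is not what is asserted here, so I concentrate on necessity.

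\textbf{Step 1 (element orders).} Take $g\in G$ with $o(g)=m$ and $S=\{g,g^{-1}\}$. The graph $\operatorname{Cay}(G,S)$ is a disjoint union of copies of $\operatorname{Cay}(\langle g\rangle,\{g,g^{-1}\})$, which is an $m$-cycle when $m\ge 3$. The eigenvalues of an $m$-cycle are $2\cos(2\pi k/m)$, and these are all integers only for $m\in\{3,4,6\}$. Hence every element of $G$ has order in $\{1,2,3,4,6\}$.

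\textbf{Step 2 (subgroups).} An induced Cayley graph on a subgroup $H\le G$ with $S\subseteq H$ is a union of copies of $\operatorname{Cay}(H,S)$. Therefore every subgroup of a CI group is CI.

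\textbf{Step 3 (abelian case).} For abelian $G$ of exponent dividing $12$, I use the characters: $\chi(S)$ must be an integer for all inverse-closed $S$. If $G$ contains both an element $a$ of order $4$ and an element $b$ of order $3$, take $S=\{ab,(ab)^{-1}\}$, where $ab$ has order $12$. This gives a $12$-cycle, which is not integral, a contradiction. So $G$ is $\mathbb{Z}_2^n\times\mathbb{Z}_3^m$ or $\mathbb{Z}_2^n\times\mathbb{Z}_4^m$. Elements of order $6$ are allowed here and are consistent with the first family.

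\textbf{Step 4 (non-abelian case).} By Step 2, every proper subgroup is CI. I would then exclude small minimal obstructions by exhibiting explicit non-integral Cayley graphs on them:
\begin{itemize}
\item $D_4$, via a connection set consisting of two reflections together with a rotation, chosen to make the eigenvalues irrational;
\item $A_4$;
\item $\mathbb{Z}_3\times S_3$;
\item the non-abelian groups of order $16$ and of exponent $4$ other than $Q_8\times\mathbb{Z}_2$;
\item groups containing $S_3\times\mathbb{Z}_2$ or $\operatorname{Dic}_{12}\times\mathbb{Z}_2$.
\end{itemize}
Using $o(g)\in\{1,2,3,4,6\}$, $G$ is a $\{2,3\}$-group, hence solvable. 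A $2$-group with no $D_4$ subgroup and exponent $4$, excluding the bad order-$16$ groups, is Hamiltonian, hence $Q_8\times\mathbb{Z}_2^n$ since exponent $4$ forbids larger odd parts. In the mixed case, excluding $A_4$ and $\mathbb{Z}_3\times S_3$ forces the Sylow $3$-subgroup to be $\mathbb{Z}_3$ with a small $2$-part. Ruling out direct factors then leaves only $S_3$ and $\operatorname{Dic}_{12}$.

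\textbf{Main obstacle.} The hard step is Step 4: I have to find, for each excluded small group, an explicit inverse-closed $S$ whose spectrum (computed via the irreducible representations) is non-integral. I also have to make sure the list of minimal excluded groups is complete. I would attempt this by a case analysis on $|G|=2^a3^b$ through Sylow structure.
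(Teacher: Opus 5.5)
The paper does not prove this statement. It quotes it from Abdollahi--Jazaeri and Ahmady--Bell--Mohar, so your argument has to stand on its own. Steps 1--3 are correct; Step 3 does not even need characters, since $ab$ would have order $12$ and contradict Step 1.

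The gap is Step 4, and it is more than missing bookkeeping. Your list of obstructions is incomplete, so the structural conclusions you draw from it are false.
\begin{itemize}
\item \textbf{Non-abelian $3$-groups are never treated.} The Heisenberg group of order $27$ (exponent $3$) contains none of your obstructions and is not on the list. For generators $x,y$, its $3$-dimensional representation sends $x+x^{-1}+y+y^{-1}$ to $\operatorname{diag}(1,-2,-2)+J$, where $J$ is the all-ones matrix. That matrix has eigenvalues $1\pm\sqrt{3}$.
\item \textbf{Mixed case, Sylow $3$-subgroup.} Excluding $A_4$ and $\mathbb{Z}_3\times S_3$ does not force a cyclic Sylow $3$-subgroup. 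The generalized dihedral group $\mathbb{Z}_3^2\rtimes\langle t\rangle$, with $t$ inverting, has no element of order $6$ and avoids your whole list. Yet $S=\{t,xt,yt\}$ gives eigenvalues $\pm|2+\zeta_3|=\pm\sqrt{3}$.
\item \textbf{Mixed case, $\operatorname{SL}(2,3)$.} Your sketch also leaves this group standing: all its proper subgroups are $\operatorname{CI}$ and it contains none of your obstructions.
\item \textbf{$2$-groups.} The claim ``exponent $4$, no $D_4$, no non-abelian subgroup of order $16$ other than $Q_8\times\mathbb{Z}_2$ $\Rightarrow$ Hamiltonian'' fails for R\'edei's minimal non-abelian group $\langle a,b\mid a^4=b^4=1,\ [a,b]^2=1,\ [a,b]\text{ central}\rangle$ of order $32$. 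Every proper subgroup is abelian, but $\langle a\rangle$ is not normal.
\end{itemize}

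Part of this is repaired by a tool you did not use: quotients of $\operatorname{CI}$ groups are $\operatorname{CI}$. Pull an inverse-closed $\bar S\subseteq G/N$ not containing the identity back to $G$. The resulting Cayley graph has spectrum $|N|\cdot\operatorname{Spec}(\operatorname{Cay}(G/N,\bar S))$ together with zeros. An algebraic integer $\lambda$ with $|N|\lambda\in\mathbb{Z}$ is an integer, so integrality passes to $G/N$. This disposes of $\operatorname{SL}(2,3)$, which has quotient $A_4$. It also disposes of the order-$32$ group, whose quotient by $\langle a^2\rangle$ is a non-abelian group of order $16$ and exponent $4$ other than $Q_8\times\mathbb{Z}_2$.

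The Heisenberg group and $\mathbb{Z}_3^2\rtimes\mathbb{Z}_2$ are different: all their proper subgroups and quotients are $\operatorname{CI}$, so they must be excluded directly. What is missing is a genuine classification of the minimal non-$\operatorname{CI}$ $\{2,3\}$-groups, minimal with respect to both subgroups and quotients, with an explicit non-integral connection set for each. Without it, Step 4 does not prove the statement.
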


In this section, we prove Theorem \ref{CCI-groups-theorem} and thereby completely classify all Cayley colour integral groups. We begin with an elementary lemma.

\begin{lemma}\label{CCI-implies-CI-lemma}
Every $\operatorname{\mathfrak{C}CI}$ group is $\operatorname{CI}$.
\end{lemma}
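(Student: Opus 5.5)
The plan is to show directly that every connection set $S$ arises as the support of an admissible colour function, so that the Cayley graph $\operatorname{Cay}(G,S)$ is itself one of the Cayley colour graphs covered by the $\operatorname{\mathfrak{C}CI}$ hypothesis.

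Let $G$ be a $\operatorname{\mathfrak{C}CI}$ group, and let $S\subseteq G\setminus\lbrace{1}\rbrace$ be an arbitrary inverse-closed connection set. We must show that $\operatorname{Cay}(G,S)$ is integral. Define $f_S:G\to\mathbb{Z}$ to be the indicator function of $S$, that is, $f_S(g)=1$ if $g\in S$ and $f_S(g)=0$ otherwise.

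We first check that $f_S$ is admissible in Definition \ref{definition-Cay-col-int-groups}(1). Clearly $f_S$ takes integer values. Since $S=S^{-1}$, we have $g\in S$ if and only if $g^{-1}\in S$, so $f_S(g)=f_S(g^{-1})$ for all $g\in G$. Note that no class-function assumption is needed, because the $\operatorname{\mathfrak{C}CI}$ condition quantifies over all integer-valued inverse-symmetric functions.

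Next we compare adjacency matrices. By definition, the adjacency matrix of $\Gamma_{f_S}=\operatorname{Cay}(G,f_S)$ is $[f_S(gh^{-1})]_{g,h\in G}$. Its $(g,h)$-entry equals $1$ exactly when $gh^{-1}\in S$, and equals $0$ otherwise. The diagonal entries are $f_S(1)=0$, since $1\notin S$. This is precisely the adjacency matrix of $\operatorname{Cay}(G,S)$, as already remarked in the introduction where $\operatorname{Cay}(G,f)$ with $f:G\to\lbrace{0,1}\rbrace$ is identified with $\operatorname{Cay}(G,S)$.

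Since $G$ is $\operatorname{\mathfrak{C}CI}$, the graph $\Gamma_{f_S}$ is integral. Its adjacency matrix is that of $\operatorname{Cay}(G,S)$, so $\operatorname{Cay}(G,S)$ is integral. As $S$ was an arbitrary connection set, $G$ is $\operatorname{CI}$.

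There is no real obstacle in this argument. The only point requiring care is confirming that the definitions of the two adjacency matrices agree entrywise, including the diagonal.
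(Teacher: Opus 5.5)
Your proof is correct and is essentially the paper's own argument: the paper also takes the characteristic function $\delta_S$ of $S$, uses $S=S^{-1}$ to get $\delta_S(g)=\delta_S(g^{-1})$, and identifies the adjacency matrix of $\operatorname{Cay}(G,\delta_S)$ with that of $\operatorname{Cay}(G,S)$. Your explicit check of the diagonal ($f_S(1)=0$ because $1\notin S$) is a detail the paper leaves implicit.
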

\pf Let $G$ be a $\operatorname{\mathfrak{C}CI}$ group and let $S\subseteq G\setminus\lbrace{1}\rbrace$ be inverse-closed. Define the characteristic function $\delta_S:G\to\lbrace{0,1}\rbrace$ on $S$ as
$$\delta_S(x)=\begin{cases}
    1, & \text{if $x\in S$},\\
    0, & \text{if $x\notin S$.}
\end{cases}$$
Since $S^{-1}=S$, $\delta_S(g)=\delta_S(g^{-1})$ for all $g\in G$. Thus the adjacency matrix of the Cayley graph $\Gamma=\operatorname{Cay}(G,S)$ can be identified with that of the Cayley colour graph $\Gamma_{\delta_S}=\operatorname{Cay}(G,\delta_S)$. Since $G$ is $\operatorname{\mathfrak{C}CI}$, $\Gamma_{\delta_S}$ is integral and hence $\Gamma$ is integral. Hence $G$ is a $\operatorname{CI}$ group.\qed

\begin{lemma}\label{S3-Dic12-non-CCI-lemma}
$S_3$ and $\mathbb{Z}_3\rtimes\mathbb{Z}_4$ are not $\operatorname{\mathfrak{C}CI}$ groups.
\end{lemma}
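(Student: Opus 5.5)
We need to exhibit, for each group, an integer-valued function $f$ with $f(g)=f(g^{-1})$ for which $\Gamma_f$ is not integral. The plan is to choose $f$ as a class function, because then Lemma~\ref{Cay-col-graph-integral-iff-cond-corollary} reduces non-integrality to exhibiting some $h\in\mathbb{Z}_n^*$ with $f^h\neq f$. A $\{0,1\}$-valued $f$ is useless here, since both groups are $\operatorname{CI}$ by Theorem~\ref{CI-groups-theorem}. Moreover, every element of $S_3$ and of $\mathbb{Z}_3\rtimes\mathbb{Z}_4$ has order in $\{1,2,3,4,6\}$, so $\operatorname{Atom}(g)\subseteq\{g,g^{-1}\}$ and any inverse-symmetric class function satisfies $f^h=f$. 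So a class function cannot work either, and the witness must be a non-class function.

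For $S_3$, the plan is to work with eigenvalues directly.
\begin{enumerate}
\item Choose two distinct transpositions $t_1,t_2$ and set $f=\delta_{t_1}+2\delta_{t_2}$. This is integer-valued and inverse-symmetric, since involutions are self-inverse.
\item By the regular representation, the adjacency matrix $A=[f(gh^{-1})]$ is the image of the group-algebra element $a=t_1+2t_2$ in the (right or left) regular representation.
\item Its spectrum is therefore the union of the spectra of $\rho(a)$ over irreducible representations $\rho$, each with multiplicity $\deg\rho$.
\item On the trivial and sign representations $a$ acts by $3$ and $-3$.
\item On the $2$-dimensional representation $\rho(t_1)$ and $\rho(t_2)$ are reflections whose axes meet at angle $\pi/3$. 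Then $\operatorname{tr}\rho(a)=0$, since a reflection has trace $0$.
\item Hence the eigenvalues on this representation are $\pm\sqrt{-\det\rho(a)}$. Compute $\det(\rho(t_1)+2\rho(t_2))$ using explicit matrices, for instance $\rho(t_1)=\operatorname{diag}(1,-1)$ and $\rho(t_2)$ the reflection in the line at angle $\pi/3$.
\item This gives eigenvalues $\pm\sqrt{1+4+4\cos(2\pi/3)}=\pm\sqrt{3}$, which are not integers.
\end{enumerate}
Equivalently, one can check that $A^2=5I+2(t_1t_2+t_2t_1)$-action, which has eigenvalue $3$ on the $2$-dimensional part, so eigenvalues $\pm\sqrt3$ appear. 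Either way $\Gamma_f$ is not integral.

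For $\mathbb{Z}_3\rtimes\mathbb{Z}_4=\langle a,b\rangle$, the plan is to reuse the $S_3$ example via the quotient map $\mathbb{Z}_3\rtimes\mathbb{Z}_4\to S_3$ by $\langle a^3\rangle=\langle b^2\rangle$, under which $b$ maps to a transposition.
\begin{enumerate}
\item Take $f=\delta_{\{b,b^{-1}\}}+2\delta_{\{ab,(ab)^{-1}\}}$, which is inverse-symmetric by construction.
\item The $2$-dimensional irreducible representations of $S_3$ lift to $\mathbb{Z}_3\rtimes\mathbb{Z}_4$. On such a lift, $\rho(b)=\rho(b^{-1})$ equals the reflection $R_1$, and likewise $\rho(ab)=\rho((ab)^{-1})$ equals a second reflection $R_2$.
\item Hence $\rho(a_f)=2R_1+4R_2$, which has trace $0$ and determinant $-4\cdot3$, giving eigenvalues $\pm2\sqrt{3}$.
\item So the corresponding Cayley colour graph is not integral.
\end{enumerate}

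The main obstacle is justifying the spectral decomposition for a non-class connection function, since Lemma~\ref{spectrum-Cay-col-digraph} only covers class functions. This is handled by the standard fact that $A$ is the matrix of multiplication by $\sum_g f(g)g$ in $\mathbb{C}[G]$, and $\mathbb{C}[G]\cong\bigoplus_\rho \rho^{\oplus\deg\rho}$. Beyond that, the work is checking the $2\times2$ determinant, which is routine.
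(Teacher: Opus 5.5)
Your proof is correct, but it argues differently from the paper. The paper writes down two ad hoc non-class witnesses $\alpha$ on $S_3$ and $\beta$ on $\mathbb{Z}_3\rtimes\mathbb{Z}_4$, and reads their spectra (containing $-1\pm\sqrt{13}$ and $\pm2\sqrt3$) off a SageMath computation. You instead identify $[f(gh^{-1})]$ with left multiplication by $\sum_g f(g)g$ on $\mathbb{C}[G]$. The spectrum is then the union of the spectra of $\rho(\sum_g f(g)g)$ over irreducible representations $\rho$, and everything reduces to the identity $(R_1+2R_2)^2=(5+4\cos(2\pi/3))I=3I$ for two reflections whose axes meet at angle $\pi/3$. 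Your checks are sound: $b$ and $ab$ map to distinct transpositions of $S_3$, $\rho(b)=\rho(b^{-1})$ because $b^2\in\ker\rho$, and $\{b,b^{-1}\}$ and $\{ab,(ab)^{-1}\}=\{ab,ab^3\}$ are disjoint.

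What your route buys is a computer-free, hand-checkable proof with a minimal witness, plus a structural point. Your function on $\mathbb{Z}_3\rtimes\mathbb{Z}_4$ is exactly the pullback of the $S_3$ witness along the quotient by $N=\langle b^2\rangle$, since the two sets are the cosets $bN$ and $abN$. So after reordering vertices by cosets its adjacency matrix is $A_{S_3}\otimes J_2$, with spectrum $2\operatorname{Spec}(A_{S_3})$ together with $0$. The same tensor argument shows that any group with a non-$\operatorname{\mathfrak{C}CI}$ quotient is itself non-$\operatorname{\mathfrak{C}CI}$: a non-integral eigenvalue $\lambda$ of an integer matrix is irrational, so $|N|\lambda\notin\mathbb{Z}$. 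This yields the paper's quotient-closure corollary without appealing to the classification.

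The paper's route is shorter to write but opaque. Your block computation even exposes a slip in it: $\alpha$ acts on the sign representation by $1+1-3-7-4=-12$, so the eigenvalue printed as $12$ should be $-12$. This is consistent with $\operatorname{tr}A=6\alpha(1)=0$, whereas the printed spectrum sums to $24$.
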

\pf $1).$ For the group $S_3=\langle{a,b\mid a^3=b^2=1,ab=ba^{-1}}\rangle$, consider the function $\alpha:S_3\to\mathbb{Z}$ defined by
$$\alpha(1)=0,\alpha(a)=\alpha(a^2)=1,\alpha(b)=3,\alpha(ba)=7,\alpha(ba^2)=4.$$
It can be shown using SageMath \cite{stein2007sage} that the spectrum of $\Gamma_\alpha=\operatorname{Cay}(S_3,\alpha)$ is given by
$$\operatorname{Spec}(\Gamma_\alpha)=\begin{pmatrix}
    16 & 12 & -1+\sqrt{13} & -1-\sqrt{13}\\
    1 & 1 & 2 & 2
\end{pmatrix}.$$

\medskip

$2).$ For the group $\mathbb{Z}_3\rtimes\mathbb{Z}_4=\lbrace{a,b\mid a^6=1,a^3=b^2, bab^{-1}=a^{-1}}\rbrace$, consider the function $\beta:\mathbb{Z}_3\rtimes\mathbb{Z}_4\to\mathbb{Z}$ defined by
$$\beta(1)=0,\beta(a)=\beta(a^5)=1,\beta(a^2)=\beta(a^4)=7,$$
$$\beta(b)=\beta(b^3)=3,\beta(b^2)=8,\beta(ab)=\beta(ab^3)=4,\beta(a^5b)=\beta(a^5b^3)=5.$$
Then using SageMath the spectrum of $\Gamma_\beta=\operatorname{Cay}(\mathbb{Z}_3\rtimes\mathbb{Z}_4,\beta)$ can be obtained as
$$\operatorname{Spec}(\Gamma_\beta)=\begin{pmatrix}
    48 & 4 & 0 & -14 & 2\sqrt{3} & -2\sqrt{3}\\
    1 & 2 & 1 & 4 & 2 & 2
\end{pmatrix}.$$
Hence $S_3$ and $\mathbb{Z}_3\rtimes\mathbb{Z}_4$ are not $\operatorname{\mathfrak{C}CI}$.\qed\\
\\
\medskip
\textbf{Proof of Theorem \ref{CCI-groups-theorem}:}

\medskip

Note that any function $f:G\to\mathbb{Z}$ on an abelian group $G$ is a class function. Therefore, all the abelian $\operatorname{\mathfrak{C}CI}$ groups are precisely the abelian $\mathfrak{F}$-$\operatorname{\mathfrak{C}CI}$ groups. From Theorem \ref{finite-F-CCI-groups-theorem}, we obtain that the abelian $\mathfrak{F}$-$\operatorname{\mathfrak{C}CI}$ groups are $\mathbb{Z}_2^n\times\mathbb{Z}_3^m$ and $\mathbb{Z}_2^n\times\mathbb{Z}_4^m$, where $m,n\ge 0$. Also note that the conjugacy classes of the quaternion group $Q_8=\lbrace{1,-1,i,-i,j,-j,k,-k}\rbrace$ are $\lbrace{1}\rbrace,\lbrace{-1}\rbrace,\lbrace{i,-i}\rbrace,\lbrace{j,-j}\rbrace,\lbrace{k,-k}\rbrace$. Hence the conjugacy classes of $Q_8\times\mathbb{Z}_2^n$ become $$\lbrace{(1,x_1,\ldots,x_n)}\rbrace, \lbrace{(-1,x_1,\ldots,x_n)}\rbrace, \lbrace{(i,x_1,\ldots,x_n),(-i,x_1,\ldots,x_n)}\rbrace,$$
$$\lbrace{(j,x_1,\ldots,x_n),(-j,x_1,\ldots,x_n)}\rbrace,\lbrace{(k,x_1,\ldots,x_n),(-k,x_1,\ldots,x_n)}\rbrace.$$
where $x_i\in\mathbb{Z}_2$ for all $i\in\lbrace{1,\ldots,n}\rbrace$.

Since each conjugacy class of $Q_8\times\mathbb{Z}_2^n$ is inverse-closed, we have that every $f:Q_8\times\mathbb{Z}_2^n\to\mathbb{Z}$ satisfying $f(x)=f(x^{-1})$ for all $x\in Q_8\times\mathbb{Z}_2^n$ is essentially a class function. Hence by Theorem \ref{finite-F-CCI-groups-theorem}, the Cayley colour graph $\operatorname{Cay}(Q_8\times\mathbb{Z}_2^n,f)$ is integral for each such $f$. Combining all these, together with Lemma \ref{CCI-implies-CI-lemma}, \ref{S3-Dic12-non-CCI-lemma} and Theorem \ref{CI-groups-theorem}, we have the result.\qed

\begin{corollary}
Subgroup and quotient of a $\operatorname{\mathfrak{C}CI}$ group is $\operatorname{\mathfrak{C}CI}$.
\end{corollary}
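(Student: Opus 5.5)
The plan is to read the corollary off the classification in Theorem \ref{CCI-groups-theorem}. By that theorem it suffices to show that every subgroup and every quotient of a group in the three families $\mathbb{Z}_2^n\times\mathbb{Z}_3^m$, $\mathbb{Z}_2^n\times\mathbb{Z}_4^m$ and $Q_8\times\mathbb{Z}_2^n$ is again isomorphic to a group in one of these families.

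\textbf{Abelian families.} Let $G$ be $\mathbb{Z}_2^n\times\mathbb{Z}_3^m$ or $\mathbb{Z}_2^n\times\mathbb{Z}_4^m$, and let $H$ be a subgroup or quotient of $G$. Then $H$ is abelian and its exponent divides $\exp(G)$, which is $6$ or $4$ respectively.
\begin{itemize}
\item An abelian group of exponent dividing $6$ is $\mathbb{Z}_2^{a}\times\mathbb{Z}_3^{b}$, by decomposing into its Sylow $2$- and $3$-parts.
\item An abelian group of exponent dividing $4$ is $\mathbb{Z}_2^a\times\mathbb{Z}_4^b$, by the structure theorem for finite abelian groups.
\end{itemize}
Either way $H$ lies in the list.

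\textbf{The family $G=Q_8\times\mathbb{Z}_2^n$.} Here $G$ has exponent $4$.
\begin{itemize}
\item If $H$ is abelian, the previous paragraph applies and $H\cong\mathbb{Z}_2^a\times\mathbb{Z}_4^b$.
\item Suppose $H\le G$ is non-abelian. Its projection $\pi(H)$ to $Q_8$ must be non-abelian, since $H\le\pi(H)\times\mathbb{Z}_2^n$. The proper subgroups of $Q_8$ are cyclic, so $\pi(H)=Q_8$. The commutator subgroup $[H,H]$ projects onto $[Q_8,Q_8]=\{\pm1\}$ and lies in $Q_8\times\{0\}$, hence $(-1,0)\in H$. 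Now pick $h_i=(i,u)$ and $h_j=(j,v)$ in $H$. They satisfy $h_i^2=h_j^2=(-1,0)$ and $h_ih_j=h_jh_i^{-1}$, so they generate a copy $K$ of $Q_8$ inside $H$ with $K\cap(\{1\}\times\mathbb{Z}_2^n)=1$. Since $\pi(K)=Q_8$, we get $H=K\times(H\cap(\{1\}\times\mathbb{Z}_2^n))\cong Q_8\times\mathbb{Z}_2^{a}$. The factors commute because the second one is central.
\item Suppose $H=G/N$ is non-abelian, so $N\not\supseteq G'=\langle(-1,0)\rangle$. Since $|G'|=2$, this means $N\cap G'=1$. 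If $N$ contained an element $(q,x)$ with $q\ne\pm1$, then $(q,x)^2=(-1,0)\in N$, a contradiction. Hence $N\le\{\pm1\}\times\mathbb{Z}_2^n$, and in fact $N$ is an elementary abelian $2$-subgroup with $N\cap G'=1$. The central subgroup $Z=\{\pm1\}\times\mathbb{Z}_2^n\cong\mathbb{Z}_2^{n+1}$ is a vector space over $\mathbb{F}_2$, and $\langle(-1,0)\rangle\oplus N$ extends to a complement $C\supseteq N$ of $\langle(-1,0)\rangle$ in $Z$. Then $G\cong Q_8\times C$ by a change of central coordinates, with $N\le C$. Therefore $G/N\cong Q_8\times C/N\cong Q_8\times\mathbb{Z}_2^a$.
\end{itemize}

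The main obstacle is this non-abelian subgroup/quotient case for $Q_8\times\mathbb{Z}_2^n$. The key points there are that $Q_8$ splits off and that the relevant central subgroups avoid $G'$. The identification $G\cong Q_8\times C$ should be checked with some care: map $Q_8$ into $G$ via $q\mapsto(q,0)$ and note that $Q_8\cap C=1$.
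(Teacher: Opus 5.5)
Your proposal is correct and follows the same route as the paper: the paper states this corollary immediately after proving Theorem~\ref{CCI-groups-theorem} and gives no further argument, so it is read off the classification. Your check that the three families are closed under subgroups and quotients supplies exactly the details the paper leaves implicit, in particular the splitting-off of $Q_8$ in the non-abelian subgroup and quotient cases of $Q_8\times\mathbb{Z}_2^n$.
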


\section{$\mathfrak{F}$-Cayley colour integral ($\mathfrak{F}$-$\mathfrak{C}$CI) groups}\label{fcci_section}

We have already seen that subgroup of a $\operatorname{\mathfrak{C}CI}$ group is $\operatorname{\mathfrak{C}CI}$. Also from [\cite{abdollahi2014groups}, Lemma $2.2$], we know that subgroup of a $\operatorname{CI}$ group is $\operatorname{CI}$. We now show that this property also holds for $\mathfrak{F}$-$\operatorname{\mathfrak{C}CI}$ groups. 


\begin{lemma}\label{subgroup-of-F-CCI-is-CCI-lemma}
Subgroup of a $\mathfrak{F}$-$\operatorname{\mathfrak{C}CI}$ group is $\mathfrak{F}$-$\operatorname{\mathfrak{C}CI}$.
\end{lemma}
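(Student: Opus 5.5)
The plan is to use the integrality criterion of Lemma \ref{Cay-col-graph-integral-iff-cond-corollary}, which reduces everything to Galois-invariance of class functions. Concretely, I will show that $G$ is $\mathfrak{F}$-$\operatorname{\mathfrak{C}CI}$ if and only if every element of $G$ is conjugate to all of its generators of the same cyclic subgroup, up to inversion in a suitable sense. That condition is a statement about elements and powers, and such statements pass to subgroups once they are phrased correctly.

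First I will characterize $\mathfrak{F}$-$\operatorname{\mathfrak{C}CI}$ directly. Let $n=|G|$. By Lemma \ref{Cay-col-graph-integral-iff-cond-corollary}, $G$ is $\mathfrak{F}$-$\operatorname{\mathfrak{C}CI}$ if and only if $f(g^h)=f(g)$ for every $f\in\mathfrak{F}$, every $g\in G$ and every $h\in\mathbb{Z}_n^*$. Now take $g\in G$ and let $f$ be the indicator function of $C_g\cup C_{g^{-1}}$. This $f$ is a class function, is integer valued, and satisfies $f(x)=f(x^{-1})$, so $f\in\mathfrak{F}$. The criterion then forces $g^h\in C_g\cup C_{g^{-1}}$ for all $h$ coprime to $n$. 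These $g^h$ exhaust $\operatorname{Atom}(g)$, because every unit modulo $o(g)$ lifts to a unit modulo $n$. So $G$ is inverse semi-rational.

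Conversely, suppose $G$ is inverse semi-rational and $f\in\mathfrak{F}$. For any $h$ coprime to $n$, the element $g^h$ lies in $C_g$ or in $C_{g^{-1}}$, so $f(g^h)$ equals $f(g)$ or $f(g^{-1})$, and both equal $f(g)$. So the integrality condition holds. This gives a lemma: $\mathfrak{F}$-$\operatorname{\mathfrak{C}CI}$ is the same as inverse semi-rational.

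The difficulty is that Remark \ref{Q-ISR-SR-not-subgroup-closed-remark} says inverse semi-rationality is not subgroup-closed, as $\mathbb{Z}_5\le S_5$ shows. So closure must come from something stronger. Since Theorem \ref{finite-F-CCI-groups-theorem} has not yet been proved, I cannot simply invoke it. I have to avoid circularity and work directly with the criterion on $H\le G$.

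Take $f\in\mathfrak{F}(H)$ and suppose $H$ fails the criterion. Then some $x\in H$ and some $h$ coprime to $|H|$ (equivalently, to $o(x)$) give $f(x^h)\ne f(x)$. In that case $x^h$ is not $H$-conjugate to $x^{\pm1}$, and the aim is to produce a bad function on $G$.

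The main obstacle is that functions in $\mathfrak{F}(G)$ only see $G$-conjugacy, and $G$-conjugacy may fuse $x^h$ with $x$. The $S_5$ example shows exactly this fusion can happen in general. What I would try first is a size argument:
\begin{itemize}
\item Use the characterization above, $G$ inverse semi-rational, to bound $|\operatorname{Atom}(g)|\le 2$ for all $g\in G$. This would need extra structure, for instance choosing $f$ to separate $g^h$ from $g$ via an $\operatorname{Aut}$-type argument.
\item From that bound, deduce $\varphi(o(g))\le 2$, that is, $o(g)\in\{1,2,3,4,6\}$.
\end{itemize}

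If $o(g)\in\{1,2,3,4,6\}$ holds in $G$, it holds in $H$, and then $\operatorname{Atom}(x)=\{x,x^{-1}\}$ for all $x\in H$. Hence $f(x^h)\in\{f(x),f(x^{-1})\}=\{f(x)\}$, which gives closure.

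I expect the bound $\varphi(o(g))\le 2$ to be the real obstacle. It is essentially the forward direction of Theorem \ref{finite-F-CCI-groups-theorem}. It may fail as stated, given that inverse semi-rational groups such as $S_5$ can have elements of order $5$. If it does fail, I would re-examine whether the lemma needs a different, genuinely subgroup-sensitive argument.
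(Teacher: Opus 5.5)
Your first step is correct and decisive. Via Lemma \ref{Cay-col-graph-integral-iff-cond-corollary} and the indicator of $C_g\cup C_{g^{-1}}$, a group is $\mathfrak{F}$-$\operatorname{\mathfrak{C}CI}$ exactly when it is inverse semi-rational, equivalently $\operatorname{NCI}$. The step that fails is your planned bound $|\operatorname{Atom}(g)|\le 2$, i.e.\ $\varphi(o(g))\le 2$. It is not merely hard to prove; it is false, and with it the statement itself is false. So no argument, subgroup-sensitive or not, can close the gap.

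Take $G=S_5$. It is a $\mathbb{Q}$-group, so for every $f\in\mathfrak{F}$ each eigenvalue $\lambda_\chi=\chi(1)^{-1}\sum_{g}f(g)\chi(g)$ is rational and an algebraic integer, hence an integer. Thus $S_5$ is $\mathfrak{F}$-$\operatorname{\mathfrak{C}CI}$. Now let $H=\langle x\rangle\cong\mathbb{Z}_5$. The function $\delta_{\{x,x^{-1}\}}$ lies in $\mathfrak{F}(H)$ and gives the $5$-cycle, whose eigenvalues $2\cos(2\pi k/5)$, $k=1,\dots,4$, are irrational. Two smaller counterexamples:
\begin{itemize}
\item the Frobenius group $\mathbb{Z}_5\rtimes\mathbb{Z}_4$ of order $20$, which is inverse semi-rational and contains $\mathbb{Z}_5$;
\item $Q_8\times\mathbb{Z}_3$, which is inverse semi-rational by Proposition \ref{NCI-direct-product-proposition} and contains a cyclic subgroup of order $12$.
\end{itemize}
You had every ingredient. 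The right conclusion from your characterization together with $\mathbb{Z}_5\le S_5$ was that the lemma cannot hold, not that a different argument is needed.

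The paper's own proof breaks at exactly the fusion you were worried about. It defines $\Psi(g)=\sum_{x\in G}\tilde\psi(xgx^{-1})$, which equals $|H|\operatorname{Ind}_H^G\psi$. In the next display, however, it evaluates $\sum_{h\in H}\Psi(h)\mu(h)$ with the inner sum running only over $x\in H$. This tacitly uses $\operatorname{Res}^G_H\Psi=|H|\psi$, which is false: conjugation by elements of $G\setminus H$ can send $h\in H$ to other elements of $H$. In the example above, $\operatorname{Res}^G_H\Psi=10\,\delta_{H\setminus\{1\}}$. This function is Galois-invariant, so it cannot detect that $\sum_{h\in H}\psi(h)\mu(h)$ is irrational. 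Integrality of $\operatorname{Cay}(G,\Psi)$ only gives, via Frobenius reciprocity, rationality of $\langle\psi,\operatorname{Res}^G_H\chi\rangle_H$ for $\chi\in\operatorname{Irr}(G)$. It says nothing about $\langle\psi,\mu\rangle_H$ for individual $\mu\in\operatorname{Irr}(H)$.

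The necessity half of Theorem \ref{finite-F-CCI-groups-theorem} rests on this lemma, so it fails too: $S_5$ is $\mathfrak{F}$-$\operatorname{\mathfrak{C}CI}$ yet has elements of order $5$. What survives is your equivalence of $\mathfrak{F}$-$\operatorname{\mathfrak{C}CI}$ with $\operatorname{NCI}$. That property is not subgroup-closed, as Remark \ref{Q-ISR-SR-not-subgroup-closed-remark} already records.
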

\pf Let $G$ be a $\mathfrak{F}$-$\operatorname{\mathfrak{C}CI}$ group and $H$ be an arbitrary subgroup of $G$. Let $\psi:H\to\mathbb{Z}$ be a class function on $H$ satisfying $\psi(h)=\psi(h^{-1})$ for all $h\in H$. Define $\tilde{\psi}:G\to\mathbb{Z}$ as
$$\tilde{\psi}(x)=\begin{cases}
    \psi(x),& \text{ if $x\in H$},\\
    0,& \text{ if $x\notin H$}.
\end{cases}$$
Now define $\Psi:G\to\mathbb{Z}$ by 
$$\Psi(g)=\sum_{x\in G}\tilde{\psi}(xgx^{-1}).$$
Clearly $\Psi(g)=\Psi(g^{-1})$ for all $g\in G$. Also it follows that $\Psi$ is a class function on $G$. Thus $\operatorname{Res}_{H}^{G}\Psi$ is a class function on $H$. Since $G$ is $\mathfrak{F}$-$\operatorname{\mathfrak{C}CI}$, the Cayley colour graph $\Gamma_{\Psi}=\operatorname{Cay}(G,\Psi)$ is integral. Hence by Lemma \ref{spectrum-Cay-col-digraph}, $\sum_{g\in G}\Psi(g)\chi(g)$ is an integer for all $\chi\in\operatorname{Irr}(G)$. Let $\mu\in\operatorname{Irr}(H)$ be arbitrary. Then the induced character $\operatorname{Ind}_{H}^{G}\mu$ on $G$ can be expressed as $\operatorname{Ind}_{H}^{G}\mu=\sum_{\chi\in\operatorname{Irr}(G)}c_\chi\chi$, where $c_\chi=\left\langle{\operatorname{Ind}_{H}^{G}\mu,\chi}\right\rangle$ denotes the multiplicity of $\chi$ in $\operatorname{Ind}_{H}^{G}\mu$. Now we have,
$$\sum_{h\in H}\Psi(h)\mu(h)=\sum_{h\in H}\left(\sum_{x\in H}\tilde{\psi}(xhx^{-1})\right)\mu(h)=\sum_{x\in H}\left(\sum_{h\in H}\tilde{\psi}(xhx^{-1})\mu(h)\right).$$
Rewriting $xhx^{-1}=t$ and re-indexing the sum, we have
$$\sum_{h\in H}\Psi(h)\mu(h)=\sum_{x\in H}\left(\sum_{t\in H}\tilde{\psi}(t)\mu(x^{-1}tx)\right)=\sum_{x\in H}\left(\sum_{t\in H}\psi(t)\mu(t)\right) =|H|\left(\sum_{h\in H}\psi(h)\mu(h)\right).$$
Using Fr{\"o}benius reciprocity, we obtain
\begin{align*}
\sum_{h\in H}\psi(h)\mu(h)&=\left\langle{\mu,\operatorname{Res}_{H}^{G}\Psi}\right\rangle_{H}\\
&=\left\langle{\operatorname{Ind}_{H}^{G}\mu,\Psi}\right\rangle_{G}\\
&=\left\langle{\sum_{\chi\in\operatorname{Irr}(G)}c_\chi\chi,\Psi}\right\rangle_{G}\\
&=\sum_{\chi\in\operatorname{Irr}(G)}c_\chi\left\langle{\chi,\Psi}\right\rangle_{G}\\
&=\sum_{\chi\in\operatorname{Irr}(G)}c_\chi\left(\frac{1}{|G|}\sum_{g\in G}\chi(g)\Psi(g)\right)\\
&=\frac{1}{|G|}\sum_{\chi\in\operatorname{Irr}(G)}c_\chi\left(\sum_{g\in G}\chi(g)\Psi(g)\right).
\end{align*}
Hence $\frac{1}{\mu(1)}\sum_{h\in H}\psi(h)\mu(h)\in\mathbb{Q}$. By Lemma \ref{spectrum-Cay-col-digraph}, $\frac{1}{\mu(1)}\sum_{h\in H}\psi(h)\mu(h)\in\operatorname{Spec}(\operatorname{Cay}(H,\psi))$. Since eigenvalues are algebraic integers, we have $\frac{1}{\mu(1)}\sum_{h\in H}\psi(h)\mu(h)\in\mathbb{Z}$. Thus $\operatorname{Cay}(H,\psi)$ is integral. By the arbitrariness of $\psi$, we have $H$ is an $\mathfrak{F}$-$\operatorname{\mathfrak{C}CI}$ group. This completes the proof.\qed\\
\\
\medskip
\textbf{Proof of Theorem \ref{finite-F-CCI-groups-theorem}:}

\medskip

We first prove the sufficiency. Let $G$ be a group of order $n$ such that $o(g)\in\lbrace{1,2,3,4,6}\rbrace$ for all $g\in G$. Let $f\in\mathfrak{F}$ and consider the Cayley colour graph $\Gamma_f=\operatorname{Cay}(G,f)$. Fix any $h\in\mathbb{Z}_n^{*}$. Obviously, $f^h(1)=f(1)$. We now consider four cases.

\medskip

\textbf{Case 1:} Let $g\in G$ be such that $o(g)=2$. Then $h\equiv 1\mod{2}$ and hence $f^h(g)=f(g^h)=f(g)$.

\medskip

\textbf{Case 2:} Let $g\in G$ be such that $o(g)=3$. Then $h\equiv 1$ or $2\mod{3}$, i.e., $h=3t+1$ or $3t+2$ for some integer $t\ge 0$. For $h=3t+1$, we have $f^h(g)=f(g^h)=f(g^{3t+1})=f(g)$ and for $h=3t+2$, we have $f^h(g)=f(g^h)=f(g^{3t+2})=f(g^2)=f(g^{-1})=f(g)$.

\medskip

\textbf{Case 3:} Let $g\in G$ be such that $o(g)=4$. Then $h\equiv 1$ or $3\mod{4}$, i.e., $h=4t+1$ or $4t+3$ for some integer $t\ge 0$. For $h=4t+1$, we have $f^h(g)=f(g^h)=f(g^{4t+1})=f(g)$ and for $h=4t+3$, we have $f^h(g)=f(g^h)=f(g^{4t+3})=f(g^3)=f(g^{-1})=f(g)$.

\medskip

\textbf{Case 4:} Let $g\in G$ be such that $o(g)=6$. Then $h\equiv 1$ or $5\mod{6}$, i.e., $h=6t+1$ or $6t+5$ for some integer $t\ge 0$. For $h=6t+1$, we have $f^h(g)=f(g^h)=f(g^{6t+1})=f(g)$ and for $h=6t+5$, we have $f^h(g)=f(g^h)=f(g^{6t+5})=f(g^5)=f(g^{-1})=f(g)$.

Combining all the cases, we have $f^h(g)=f(g)$ for all $g\in G$. Since $h\in\mathbb{Z}_n^{*}$ is arbitrary, we have $f^h=f$ for all $h\in\mathbb{Z}_n^{*}$. Hence by Lemma \ref{Cay-col-graph-integral-iff-cond-corollary}, $\Gamma_f$ is integral. By the arbitrariness of $f$, we have the result. 

\medskip

To prove the necessity, let $G$ be a $\mathfrak{F}$-$\operatorname{\mathfrak{C}CI}$ group. Suppose there exists $x\in G\setminus\lbrace{1}\rbrace$ such that $o(x)\notin\lbrace{2,3,4,6}\rbrace$. Let $S=\lbrace{x,x^{-1}}\rbrace$. Then $\langle{S}\rangle=\langle{x}\rangle$. Consider $\Gamma=\operatorname{Cay}(\langle{S}\rangle,S)$. Then $S$ is normal in $\langle{S}\rangle$ and hence $\delta_S$ is a class function on $\langle{S}\rangle$. Then the adjacency matrix of the Cayley colour graph $\Gamma_{\delta_S}=\operatorname{Cay}(\langle{S}\rangle,\delta_S)$ is identified with that of the Cayley graph $\Gamma$. By Lemma \ref{subgroup-of-F-CCI-is-CCI-lemma}, $\langle{S}\rangle$ is $\mathfrak{F}$-$\operatorname{\mathfrak{C}CI}$ and hence $\Gamma_{\delta_S}$, i.e., $\Gamma$ is integral. But since $|S|=2$, $\operatorname{Cay}(\langle{S}\rangle,S)$ is a connected $2$-regular graph which is isomorphic to the cycle $\mathcal{C}_{o(x)}$, which is integral only for $o(x)=3,4,6$; a contradiction.\qed

\medskip

From Theorem \ref{CI-groups-theorem} and \ref{finite-F-CCI-groups-theorem}, we have the following immediate facts.

\begin{corollary}\label{CI-implies-F-CCI-corollary}
Every $\operatorname{CI}$ group is $\mathfrak{F}$-$\operatorname{\mathfrak{C}CI}$.
\end{corollary}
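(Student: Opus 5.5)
By Theorem \ref{finite-F-CCI-groups-theorem}, a group is $\mathfrak{F}$-$\operatorname{\mathfrak{C}CI}$ exactly when every element has order in $\lbrace{1,2,3,4,6}\rbrace$. So I only need to show that every $\operatorname{CI}$ group has this property. There are two ways to do this, and I would present the direct one, keeping the classification as a cross-check.

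\emph{Via the classification.} Theorem \ref{CI-groups-theorem} lists the possible $\operatorname{CI}$ groups, and I would check each family. In $\mathbb{Z}_2^n\times\mathbb{Z}_3^m$ every element has order dividing $6$. In $\mathbb{Z}_2^n\times\mathbb{Z}_4^m$ every element has order dividing $4$. In $S_3$ the element orders are $1,2,3$. In $\operatorname{Dic}_{12}$ the elements $a^k$ have order dividing $6$, and the elements $a^kb$ satisfy $(a^kb)^2=a^k(ba^kb^{-1})b^2=a^ka^{-k}a^3=a^3$, so they have order $4$. In $Q_8\times\mathbb{Z}_2^n$ every element has order dividing $4$, since $Q_8$ has exponent $4$. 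In each case all element orders lie in $\lbrace{1,2,3,4,6}\rbrace$.

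\emph{Direct route.} This avoids the classification altogether. Suppose $G$ is $\operatorname{CI}$ and $x\in G$ has order $k\notin\lbrace{1,2,3,4,6}\rbrace$, so $k=5$ or $k\ge 7$. Take the inverse-closed set $S=\lbrace{x,x^{-1}}\rbrace$, which does not contain $1$. Then $\operatorname{Cay}(G,S)$ is a disjoint union of $[G:\langle x\rangle]$ copies of the cycle $\mathcal{C}_k$, one for each coset of $\langle x\rangle$. The eigenvalues of $\mathcal{C}_k$ are $2\cos(2\pi j/k)$ for $0\le j<k$, and $2\cos(2\pi/k)$ is an integer only when $k\in\lbrace{1,2,3,4,6}\rbrace$. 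So for our $k$ the graph $\operatorname{Cay}(G,S)$ has a non-integral eigenvalue, contradicting that $G$ is $\operatorname{CI}$. Hence every element of $G$ has order in $\lbrace{1,2,3,4,6}\rbrace$, and Theorem \ref{finite-F-CCI-groups-theorem} gives that $G$ is $\mathfrak{F}$-$\operatorname{\mathfrak{C}CI}$.

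The only step that needs a fact not stated earlier is that a cycle $\mathcal{C}_k$ with $k\ge 3$ is integral only for $k=3,4,6$. The paper already uses this in the necessity part of Theorem \ref{finite-F-CCI-groups-theorem}. If a justification is wanted: $2\cos(2\pi/k)=\zeta_k+\zeta_k^{-1}$ is an algebraic integer of degree $\varphi(k)/2$ over $\mathbb{Q}$, so it is rational only when $\varphi(k)\le 2$, that is, $k\in\lbrace{1,2,3,4,6}\rbrace$.
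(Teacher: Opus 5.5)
Your proof is correct, and the part you call a cross-check is exactly the paper's argument. The paper records this corollary as immediate from Theorem~\ref{CI-groups-theorem} (the Abdollahi--Jazaeri / Ahmady--Bell--Mohar classification) combined with Theorem~\ref{finite-F-CCI-groups-theorem}: one reads off the element orders of the listed groups, as you do, and your computation $(a^kb)^2=a^3$ in $\operatorname{Dic}_{12}$ is right.

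Your primary, direct route is genuinely different. It bypasses the classification entirely and uses only the sufficiency half of Theorem~\ref{finite-F-CCI-groups-theorem} together with the spectrum of a cycle. It is the same cycle argument the paper uses for the necessity half of Theorem~\ref{finite-F-CCI-groups-theorem}. There, however, the paper must first pass to $\langle x\rangle$ via Lemma~\ref{subgroup-of-F-CCI-is-CCI-lemma} so that $\delta_S$ becomes a class function. Under the $\operatorname{CI}$ hypothesis you may instead take $S=\{x,x^{-1}\}$ in all of $G$ and note that $\operatorname{Cay}(G,S)$ is a disjoint union of $[G:\langle x\rangle]$ copies of $\mathcal{C}_k$, one per right coset.

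Your route buys a short, self-contained proof that does not depend on a deep external theorem. It also gives a stronger conclusion: any group in which every Cayley graph $\operatorname{Cay}(G,\{x,x^{-1}\})$ is integral (in particular every group in $\mathcal{G}_2$) is $\mathfrak{F}$-$\operatorname{\mathfrak{C}CI}$. The paper's route costs nothing extra only because the classification is already invoked for Theorem~\ref{CCI-groups-theorem}.

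A trivial nit: the degree of $\zeta_k+\zeta_k^{-1}$ is $\varphi(k)/2$ only for $k\ge 3$. That is all you need, since $k\ge 5$.
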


\begin{corollary}
Quotient of a $\operatorname{\mathfrak{F}-\mathfrak{C}CI}$ group is $\operatorname{\mathfrak{F}-\mathfrak{C}CI}$.
\end{corollary}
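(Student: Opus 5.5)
The plan is to reduce the statement to Theorem \ref{finite-F-CCI-groups-theorem}, which describes the $\mathfrak{F}$-$\operatorname{\mathfrak{C}CI}$ groups purely by element orders. Let $G$ be an $\mathfrak{F}$-$\operatorname{\mathfrak{C}CI}$ group and $N\trianglelefteq G$. By Theorem \ref{finite-F-CCI-groups-theorem}, $o(g)\in\lbrace 1,2,3,4,6\rbrace$ for every $g\in G$. It therefore suffices to show that every element of $G/N$ also has order in $\lbrace 1,2,3,4,6\rbrace$. Applying Theorem \ref{finite-F-CCI-groups-theorem} again, now to $G/N$, then shows that $G/N$ is $\mathfrak{F}$-$\operatorname{\mathfrak{C}CI}$.

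The key step is the standard fact that the order of $gN$ in $G/N$ divides $o(g)$. This holds because $(gN)^{o(g)}=g^{o(g)}N=N$. So if $o(g)\in\lbrace 1,2,3,4,6\rbrace$, then $o(gN)$ is a divisor of one of $1,2,3,4,6$. The set $\lbrace 1,2,3,4,6\rbrace$ is closed under taking divisors: the divisors of $4$ are $1,2,4$, and the divisors of $6$ are $1,2,3,6$. Hence $o(gN)\in\lbrace 1,2,3,4,6\rbrace$ for every coset $gN$, and the conclusion follows.

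There is essentially no obstacle here. The only point needing care is the divisor-closure of $\lbrace 1,2,3,4,6\rbrace$, since Theorem \ref{finite-F-CCI-groups-theorem} is an exact characterization and so applies to $G/N$ directly. As an alternative route, one could try to lift class functions from $G/N$ to $G$ by composing with the projection and compare spectra via the characters of $G$ with $N$ in their kernel. That would require checking that eigenvalues of $\operatorname{Cay}(G/N,f)$ appear among those of $\operatorname{Cay}(G,f\circ\pi)$, up to the scaling factor $|N|$ and a rationality-plus-algebraic-integer argument as in Lemma \ref{subgroup-of-F-CCI-is-CCI-lemma}. The order-based argument avoids all of this, so it is the proof I would write.
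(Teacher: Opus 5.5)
Your argument is exactly the one the paper intends: the corollary is listed as an immediate consequence of Theorem \ref{finite-F-CCI-groups-theorem}. Your divisor-closure step and your use of the sufficiency direction for $G/N$ are fine. The problem is your first step. The necessity direction of Theorem \ref{finite-F-CCI-groups-theorem} is false, so being $\mathfrak{F}$-$\operatorname{\mathfrak{C}CI}$ does not force $o(g)\in\lbrace 1,2,3,4,6\rbrace$.

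Take $G=S_5$. Every $f\in\mathfrak{F}$ is an integer-valued class function, and every irreducible character of $S_5$ is integer-valued. So each eigenvalue $\lambda_\chi(f)=\frac{1}{\chi(1)}\sum_{g\in G}f(g)\chi(g)$ from Lemma \ref{spectrum-Cay-col-digraph} is rational. Being an eigenvalue of an integer matrix, it is an algebraic integer, hence an integer. Thus $S_5$ is $\mathfrak{F}$-$\operatorname{\mathfrak{C}CI}$, yet it has elements of order $5$.

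The paper's argument breaks at Lemma \ref{subgroup-of-F-CCI-is-CCI-lemma}. The subgroup $\mathbb{Z}_5\le S_5$ is not $\mathfrak{F}$-$\operatorname{\mathfrak{C}CI}$, since the $5$-cycle $\operatorname{Cay}(\mathbb{Z}_5,\lbrace x,x^{-1}\rbrace)$ is not integral. In that lemma's proof, $\Psi$ is defined by a sum over $x\in G$, but the computation of $\sum_{h\in H}\Psi(h)\mu(h)$ sums only over $x\in H$. In fact, write $f=f(1)\delta_{\lbrace 1\rbrace}+\sum f(g)\,\delta_{C_g\cup C_{g^{-1}}}$, summing over representatives $g\neq 1$ of the distinct sets $C_g\cup C_{g^{-1}}$. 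Linearity of $\lambda_\chi(f)$ in $f$ then shows that $\mathfrak{F}$-$\operatorname{\mathfrak{C}CI}$ coincides with $\operatorname{NCI}$, i.e.\ with inverse semi-rationality, which is not an element-order condition.

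The statement itself is true, and the lifting route you sketched and set aside is a sound proof. Let $\pi:G\to G/N$ be the projection, and let $f$ belong to the set $\mathfrak{F}$ of $G/N$. Then $\tilde f=f\circ\pi$ is an integer-valued class function on $G$ with $\tilde f(g)=\tilde f(g^{-1})$, so $\operatorname{Cay}(G,\tilde f)$ is integral. For $\psi\in\operatorname{Irr}(G/N)$, the inflation $\hat\psi=\psi\circ\pi$ lies in $\operatorname{Irr}(G)$ with $\hat\psi(1)=\psi(1)$, and
\[
\lambda_{\hat\psi}(\tilde f)=\frac{1}{\psi(1)}\sum_{g\in G}f(gN)\psi(gN)=|N|\cdot\frac{1}{\psi(1)}\sum_{\bar g\in G/N}f(\bar g)\psi(\bar g)=|N|\,\lambda_\psi(f).
\]
Hence $\lambda_\psi(f)\in\mathbb{Q}$. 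It is an eigenvalue of the integer matrix of $\operatorname{Cay}(G/N,f)$, hence an algebraic integer, hence in $\mathbb{Z}$. By Lemma \ref{spectrum-Cay-col-digraph} these are all the eigenvalues, so $\operatorname{Cay}(G/N,f)$ is integral. Alternatively, combine $\mathfrak{F}$-$\operatorname{\mathfrak{C}CI}=\operatorname{NCI}$ with Proposition \ref{NCI-quotient-closed-proposition}.
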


As the order of a $\mathfrak{F}$-$\operatorname{\mathfrak{C}CI}$ group is of the form $2^a3^b$, a $\mathfrak{F}$-$\operatorname{\mathfrak{C}CI}$ group is always solvable. However, we can say something more if we impose an additional restriction.
\begin{proposition}
Let $G$ be $\mathfrak{F}$-$\operatorname{\mathfrak{C}CI}$ such that $12\nmid |G|$. Then $G$ is supersolvable. 
\end{proposition}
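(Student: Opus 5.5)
By Theorem \ref{finite-F-CCI-groups-theorem}, every element of $G$ has order in $\lbrace{1,2,3,4,6}\rbrace$. In particular $|G|=2^a3^b$. The hypothesis $12\nmid|G|$ then forces either $b=0$, or $a\le 1$ (and any such group is solvable by Burnside's $p^aq^b$ theorem). I would split into these cases.

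\textbf{Case $b=0$.} Then $G$ is a $2$-group. Every finite $p$-group is nilpotent and hence supersolvable, so there is nothing to prove.

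\textbf{Case $a=0$.} Then $G$ is a $3$-group, again nilpotent and hence supersolvable.

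\textbf{Case $a=1$.} Here $|G|=2\cdot3^b$, so a Sylow $3$-subgroup $P$ has index $2$ and is normal. Moreover $G=P\rtimes\langle t\rangle$ with $o(t)=2$. The group $P$ is a $3$-group of exponent $3$, since elements of order $9$ are excluded. The goal is a chief series of $G$ with cyclic factors. I will build one through $P$, using that every $G$-chief factor inside $P$ is an irreducible $\mathbb{F}_3[\langle t\rangle]$-module. Since $\langle t\rangle\cong\mathbb{Z}_2$ has order coprime to $3$ and $\mathbb{F}_3$ contains the eigenvalues $\pm1$ of $t$, each irreducible module is one-dimensional by Maschke's theorem and diagonalizability. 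More concretely: take a $G$-chief series of $P$, which exists since $P\trianglelefteq G$. Each factor $N/M$ is an elementary abelian $3$-group (as $P$ is nilpotent), and $G$ acts on it through $G/P\cong\mathbb{Z}_2$, because $P$ acts trivially on chief factors of a $p$-group, since $[N,P]<N$ for $N\trianglelefteq P$. An involution acting on an $\mathbb{F}_3$-vector space has a one-dimensional invariant subspace, an eigenvector for $\pm1$. Minimality then forces $|N/M|=3$. Finally, $G/P$ has order $2$. So $G$ has a normal series with all factors cyclic of prime order, and $G$ is supersolvable.

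The only subtle point is the claim that $P$ acts trivially on each $G$-chief factor inside $P$. The standard argument is that $C_{N/M}(P)$ is nontrivial, because the fixed points of a $p$-group acting on a nontrivial $p$-group are nontrivial. This fixed-point subgroup is also $G$-invariant, since $P\trianglelefteq G$. Minimality then gives $C_{N/M}(P)=N/M$, and I would include this step explicitly.

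The exponent restriction is not really needed beyond guaranteeing $|G|=2^a3^b$. It simply explains why $12\nmid|G|$ leaves only these cases. The case $A_4$ (order $12$) shows the hypothesis is sharp, which I might remark on afterwards.
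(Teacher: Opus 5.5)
Your proposal is correct and follows the paper's route. You reduce to $|G|=2^a3^b$ via Theorem \ref{finite-F-CCI-groups-theorem}, handle the prime-power cases by nilpotency, and treat $|G|=2\cdot 3^b$ separately. The only difference is in that last case: the paper cites Conrad's Theorem 4.26 (via $3\equiv 1 \bmod 2$), while you prove it directly with a $G$-chief series through the normal Sylow $3$-subgroup and the $\pm1$-eigenvector argument for an involution over $\mathbb{F}_3$, which is the content of the cited result.
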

\pf By Theorem \ref{finite-F-CCI-groups-theorem}, it is clear that $|G|=2^a3^b$. As $12\nmid |G|$, either $a\le 1$ or $b=0$. If $a=0$ or $b=0$, $G$ is nilpotent and hence supersolvable. So we assume $a=1$ and $b\ge 1$, i.e., $|G|=2\cdot3^b$. As $3\equiv 1\mod 2$, by [\cite{conrad2020subgroup}, Theorem $4.26$], $G$ is supersolvable.\qed 

\section{Normal Cayley integral (NCI) groups}\label{nci_section}

In this section, we provide a few characterization of normal Cayley integral groups and study its properties. The spectrum of a normal Cayley graph can be obtained from Lemma \ref{spectrum-Cay-col-digraph} by replacing $f$ with the class function $\delta_S$. 

\begin{lemma}\label{normal-Cay-spectrum-lemma}(\cite{foster2016spectra},\cite{zieschang1988cayley})
The spectrum of the normal Cayley graph $\Gamma=\operatorname{Cay}(G,S)$ is given by
$$\lambda_\chi=\frac{1}{\chi(1)}\chi(S)$$
where $\chi\in\operatorname{Irr}(G)$ with multiplicity of $\lambda_\chi$ being $\chi(1)^2$.
\end{lemma}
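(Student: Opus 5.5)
This statement is the specialization of Lemma \ref{spectrum-Cay-col-digraph} to $f=\delta_S$, the characteristic function of $S$ introduced in the proof of Lemma \ref{CCI-implies-CI-lemma}. So the plan is to check that the hypotheses of Lemma \ref{spectrum-Cay-col-digraph} hold for $\delta_S$ and then simplify the resulting formula.

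\textbf{Step 1: $\delta_S$ is a class function.} Since $S$ is normal, $gSg^{-1}=S$ for every $g\in G$. Hence $x\in S$ if and only if $gxg^{-1}\in S$, which gives $\delta_S(gxg^{-1})=\delta_S(x)$ for all $g,x\in G$.

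\textbf{Step 2: the adjacency matrices agree.} Since $S=S^{-1}$ and $1\notin S$, the $(g,h)$-entry of the adjacency matrix of $\operatorname{Cay}(G,\delta_S)$ is $\delta_S(gh^{-1})$. This equals $1$ exactly when $gh^{-1}\in S$, that is, exactly when $g$ and $h$ are adjacent in $\Gamma$. So this matrix is the adjacency matrix of $\Gamma$, and the two graphs have the same spectrum.

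\textbf{Step 3: apply Lemma \ref{spectrum-Cay-col-digraph}.} With $f=\delta_S$, the lemma gives, for each $\chi\in\operatorname{Irr}(G)$, an eigenvalue
$$\lambda_\chi=\frac{1}{\chi(1)}\sum_{g\in G}\delta_S(g)\chi(g)=\frac{1}{\chi(1)}\sum_{s\in S}\chi(s)=\frac{1}{\chi(1)}\chi(S),$$
with multiplicity $\chi(1)^2$. These multiplicities add up to $\sum_{\chi}\chi(1)^2=|G|$, so the list is the complete spectrum of $\Gamma$.

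There is no real obstacle here. The only point needing care is Step 1: the normality of $S$ is exactly what makes Lemma \ref{spectrum-Cay-col-digraph} applicable.
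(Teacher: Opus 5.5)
Your proposal is correct and follows the paper's own route. The paper obtains this lemma by substituting the class function $\delta_S$ for $f$ in Lemma \ref{spectrum-Cay-col-digraph}. Your Steps 1--3 spell out the checks the paper leaves implicit: normality of $S$ makes $\delta_S$ a class function, the adjacency matrices coincide, and the multiplicities $\chi(1)^2$ sum to $|G|$.
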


\begin{lemma}\label{F-CCI-implies-NCI-lemma}
Every $\mathfrak{F}$-$\operatorname{\mathfrak{C}CI}$ group is $\operatorname{NCI}$.
\end{lemma}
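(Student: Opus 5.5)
The plan is to argue directly from the definitions. A normal Cayley graph is the special case of a Cayley colour graph whose connection function is the indicator of a normal, inverse-closed connection set. Let $G$ be $\mathfrak{F}$-$\operatorname{\mathfrak{C}CI}$ and let $\Gamma=\operatorname{Cay}(G,S)$ be any normal Cayley graph on $G$. Thus $S\subseteq G\setminus\lbrace{1}\rbrace$, $S=S^{-1}$, and $gSg^{-1}=S$ for all $g\in G$. As in the proof of Lemma \ref{CCI-implies-CI-lemma}, I would take the characteristic function $\delta_S:G\to\lbrace{0,1}\rbrace$.

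The key step is to check that $\delta_S\in\mathfrak{F}$. Clearly $\delta_S(G)\subseteq\mathbb{Z}$. Since $S=S^{-1}$, we have $\delta_S(g)=\delta_S(g^{-1})$. Since $S$ is normal, $x\in S$ if and only if $gxg^{-1}\in S$, so $\delta_S(gxg^{-1})=\delta_S(x)$ for all $g,x\in G$. Hence $\delta_S$ is a class function, i.e.\ $\delta_S\in Z(L(G))$.

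Because $G$ is $\mathfrak{F}$-$\operatorname{\mathfrak{C}CI}$, the Cayley colour graph $\Gamma_{\delta_S}=\operatorname{Cay}(G,\delta_S)$ is integral. Its adjacency matrix $[\delta_S(gh^{-1})]_{g,h\in G}$ is exactly the adjacency matrix of $\Gamma$, since $1\notin S$ and adjacency is given by $gh^{-1}\in S$. So $\Gamma$ is integral. Equivalently, by Lemma \ref{normal-Cay-spectrum-lemma} and Lemma \ref{spectrum-Cay-col-digraph}, both spectra are $\chi(S)/\chi(1)$ for $\chi\in\operatorname{Irr}(G)$. Since $S$ was an arbitrary normal connection set, $G$ is $\operatorname{NCI}$.

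There is no real obstacle here. The only point needing care is verifying the class-function condition, which is precisely where normality of $S$ is used; without normality, $\delta_S$ need not lie in $\mathfrak{F}$.
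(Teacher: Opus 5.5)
Your proof is correct and follows essentially the same approach as the paper. The paper likewise uses the normality of $S$ to see that $\delta_S$ is a class function lying in $\mathfrak{F}$, and identifies the adjacency matrix of $\operatorname{Cay}(G,S)$ with that of $\Gamma_{\delta_S}$ to transfer integrality; you additionally spell out the integer-valuedness and inverse-symmetry of $\delta_S$, which the paper leaves implicit.
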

\pf Let $G$ be an $\mathfrak{F}$-$\operatorname{\mathfrak{C}CI}$ group and let $S\subseteq G\setminus\lbrace{1}\rbrace$ be a normal, inverse-closed subset of $G$. Then the characteristic function $\delta_S:G\to\lbrace{0,1}\rbrace$ is a class function on $G$. Since the adjacency matrix of the normal Cayley graph $\Gamma=\operatorname{Cay}(G,S)$ is identified with that of the Cayley colour graph $\Gamma_{\delta_S}=\operatorname{Cay}(G,\delta_S)$, we have $\Gamma_{\delta_S}$ is integral and hence $\Gamma$ is integral. Hence $G$ is an $\operatorname{NCI}$ group.\qed\\
\\
\medskip
\textbf{Proof of Theorem \ref{NCI-equivalent-theorem}:} 

\medskip

$\mathbf{(1)\Rightarrow (2)}$: Let $G$ be $\operatorname{NCI}$. Let $1\neq g\in G$ and consider $C_g$, the conjugacy class of $g$ in $G$. Take $S\coloneqq C_g\cup C_{g^{-1}}$. Clearly, $S$ is normal and inverse-closed. Thus by hypothesis, the normal Cayley graph $\Gamma=\operatorname{Cay}(G,S)$ is integral. By Theorem \ref{Normal-Eulerian-iff-integral-theorem}, $S$ is Eulerian. As $g\in S$, $\operatorname{Atom}(g)\subseteq S$. Thus $\operatorname{Atom}(g)\subseteq C_g\cup C_{g^{-1}}$. Hence $G$ is inverse semi-rational.

\medskip

$\mathbf{(2)\Rightarrow (3)}$: Let $G$ be inverse semi-rational. Consider the isomorphism $\eta:\operatorname{Gal}(\mathbb{Q}(\zeta_n)/\mathbb{Q})\to\mathbb{Z}_n^{*}$, where $n=|G|$. Let $\sigma\in\operatorname{Gal}(\mathbb{Q}(\zeta_n)/\mathbb{Q})$. Then $\eta(\sigma)\in\mathbb{Z}_n^{*}$. Let $\chi\in\operatorname{Irr}(G)$ and $g\in G$. Then by Lemma \ref{sigma-eta-chi-lemma},
$$\sigma(\chi(g)+\chi(g^{-1}))=\sigma(\chi(g))+\sigma(\chi(g^{-1}))=\chi(g^{\eta(\sigma)})+\chi(g^{-\eta(\sigma)}).$$
If $g^{\eta(\sigma)}$ is conjugate to $g$, then
$$\chi(g^{\eta(\sigma)})+\chi(g^{-\eta(\sigma)})=\chi(g)+\chi(g^{-1}).$$
If $g^{\eta(\sigma)}$ is conjugate to $g^{-1}$, then
$$\chi(g^{\eta(\sigma)})+\chi(g^{-\eta(\sigma)})=\chi(g^{-1})+\chi(g).$$
In any case we have, $\sigma(\chi(g)+\chi(g^{-1}))=\chi(g)+\chi(g^{-1})$. Thus $\chi(g)+\chi(g^{-1})\in\mathbb{Q}$. By Lemma \ref{chi-algebraic-integer-lemma}, it follows that $\chi(g)+\chi(g^{-1})$ is an integer.

\medskip

$\mathbf{(3)\Rightarrow (1)}$: Let $G$ be a group such that $\chi(g)+\chi(g^{-1})\in\mathbb{Z}$ for every $\chi\in\operatorname{Irr}(G)$ and $g\in G$. Let $S\subseteq G\setminus\lbrace{1}\rbrace$ be a normal, inverse-closed subset of $G$ and consider the Cayley graph $\Gamma=\operatorname{Cay}(G,S)$. By hypothesis, $\chi(S)\in\mathbb{Z}$ for every $\chi\in\operatorname{Irr}(G)$. Thus from Lemma \ref{normal-Cay-spectrum-lemma}, $\lambda_\chi\in\mathbb{Q}$ for all $\chi\in\operatorname{Irr}(G)$. Since eigenvalues are algebraic integers, we have $\lambda_\chi\in\mathbb{Z}$ for all $\chi\in\operatorname{Irr}(G)$. Thus the normal Cayley graph $\Gamma$ is integral. Hence $G$ is an $\operatorname{NCI}$ group.\qed

\begin{corollary}\label{Rational-group-NCI-corollary}
Every $\mathbb{Q}$-group is $\operatorname{NCI}$. In particular, every symmetric group $S_n$ is $\operatorname{NCI}$ (\cite{chen2014eigenvalues}, Corollary $1.2$).
\end{corollary}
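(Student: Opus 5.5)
To prove Corollary \ref{Rational-group-NCI-corollary}, I would go through Theorem \ref{NCI-equivalent-theorem}. It says that being $\operatorname{NCI}$ is equivalent to the integrality of $\chi(g)+\chi(g^{-1})$ for every $\chi\in\operatorname{Irr}(G)$ and $g\in G$. So it is enough to check condition $(3)$ for a $\mathbb{Q}$-group $G$.

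\textbf{Main step.} Let $G$ be a $\mathbb{Q}$-group, and fix $\chi\in\operatorname{Irr}(G)$ and $g\in G$.
\begin{enumerate}
\item By definition, $\chi(g)\in\mathbb{Q}$ and $\chi(g^{-1})\in\mathbb{Q}$.
\item By Lemma \ref{chi-algebraic-integer-lemma}, both values are algebraic integers.
\item A rational algebraic integer lies in $\mathbb{Z}$, so both values are integers and hence so is their sum.
\end{enumerate}
Theorem \ref{NCI-equivalent-theorem} then gives that $G$ is $\operatorname{NCI}$.

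\textbf{Alternative route.} One can argue through condition $(2)$ instead. Using the characterization that $G$ is rational if and only if $\operatorname{Atom}(g)\subseteq C_g$ for every $g\in G$, we immediately get $\operatorname{Atom}(g)\subseteq C_g\cup C_{g^{-1}}$. So every rational group is inverse semi-rational, and hence $\operatorname{NCI}$.

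\textbf{Symmetric groups.} It remains to recall that $S_n$ is a $\mathbb{Q}$-group, which I would take as a classical fact. If a justification is wanted, the quickest is via the atom criterion. Conjugacy classes in $S_n$ are determined by cycle type. For $k$ coprime to $o(\sigma)$, the power $\sigma^k$ raises each cycle of $\sigma$ to a power coprime to its length, so each cycle remains a single cycle of the same length. Hence $\sigma^k$ has the same cycle type as $\sigma$, so $\operatorname{Atom}(\sigma)\subseteq C_\sigma$.

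There is no real obstacle here. The only point needing care is citing the atom characterization of rational groups ([\cite{huppert2013endliche}, Chapter V, Theorem $13.7~(b)$]) correctly, or else using the character-value route, which avoids it entirely.
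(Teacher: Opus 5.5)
Your proposal is correct and follows the paper's approach: the corollary is stated without separate proof as an immediate consequence of Theorem \ref{NCI-equivalent-theorem}, since a $\mathbb{Q}$-group has an integer character table and so satisfies condition $(3)$ (equivalently, $\operatorname{Atom}(g)\subseteq C_g$ gives condition $(2)$), and $S_n$ is covered because it is a $\mathbb{Q}$-group. Your cycle-type argument for the rationality of $S_n$ is a valid self-contained substitute for the paper's citation of \cite{chen2014eigenvalues}.
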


\begin{corollary}\label{abelian-CCI-CI-FCCI-NCI-equivalence-corollary}
The following are equivalent for an abelian group $G$.
\begin{enumerate}
\item $G$ is $\operatorname{\mathfrak{C}CI}$.
\item $G$ is $\operatorname{CI}$.
\item $G$ is $\mathfrak{F}$-$\operatorname{\mathfrak{C}CI}$.
\item $G$ is $\operatorname{NCI}$.
\item $G$ is semi-rational.
\item $G\cong\mathbb{Z}_2^n\times\mathbb{Z}_3^m$ or $G\cong\mathbb{Z}_2^n\times\mathbb{Z}_4^m$, for $m,n\ge 0$.
\end{enumerate}
\end{corollary}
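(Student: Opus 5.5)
The plan is to close the cycle of implications with the one new ingredient being (4)$\Rightarrow$(5)$\Rightarrow$(6). Theorem \ref{CCI-groups-theorem} gives (1)$\Rightarrow$(6) directly, and conversely each group in (6) appears in that list, so (6)$\Rightarrow$(1). Lemma \ref{CCI-implies-CI-lemma} gives (1)$\Rightarrow$(2), Corollary \ref{CI-implies-F-CCI-corollary} gives (2)$\Rightarrow$(3), and Lemma \ref{F-CCI-implies-NCI-lemma} gives (3)$\Rightarrow$(4). By Theorem \ref{NCI-equivalent-theorem}, (4) says that $G$ is inverse semi-rational, and inverse semi-rational groups are semi-rational by definition (take $r_g=-1$). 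Hence (4)$\Rightarrow$(5). It remains to prove (5)$\Rightarrow$(6).

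For (5)$\Rightarrow$(6), let $G$ be abelian and semi-rational. Since $G$ is abelian, every conjugacy class is a singleton, so the condition reads $\operatorname{Atom}(g)\subseteq\lbrace g,g^{r_g}\rbrace$ for every $g\in G$. Therefore $|\operatorname{Atom}(g)|=\varphi(o(g))\le 2$, which forces $o(g)\in\lbrace 1,2,3,4,6\rbrace$.

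So $G$ is an abelian group of exponent dividing $4$ or $6$ (the exponent divides $12$, but $12$ itself is excluded). By the structure theorem, $G$ is a product of cyclic groups of orders $2,3,4$. If both a $\mathbb{Z}_4$ factor and a $\mathbb{Z}_3$ factor occurred, $G$ would contain an element of order $12$, which is impossible. Hence $G\cong\mathbb{Z}_2^n\times\mathbb{Z}_3^m$ or $G\cong\mathbb{Z}_2^n\times\mathbb{Z}_4^m$.

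The only substantive step is this last one, and it is elementary. The main thing to check is that the bound $\varphi(o(g))\le 2$ is correctly read off the semi-rationality definition in the abelian case.
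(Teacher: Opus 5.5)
Your proof is correct and uses the same cycle of implications as the paper. The one difference is at $(5)\Rightarrow(6)$: the paper cites Lemma 5(4) of Chillag and Dolfi, while you prove it directly by noting that conjugacy classes in an abelian group are singletons, so $\varphi(o(g))=|\operatorname{Atom}(g)|\le 2$, which forces $o(g)\in\lbrace 1,2,3,4,6\rbrace$, and then applying the structure theorem.
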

\pf $(1)$ implies $(2)$, $(2)$ implies $(3)$, $(3)$ implies $(4)$ follow directly from Figure \ref{chain-diagram}. $(4)$ implies $(5)$ follows from Theorem \ref{NCI-equivalent-theorem}. $(5)$ implies $(6)$ follows from [\cite{chillag2010semi}, Lemma $5$, $(4)$]. And $(6)$ implies $(1)$ follows from Theorem \ref{CCI-groups-theorem}.\qed

\medskip


The following result serves as an alternative characterization of $\operatorname{NCI}$ groups, in terms of Cayley colour graphs.

\begin{proposition}\label{NCI-iff-condition-Cay-col-proposition}
$G$ is an $\operatorname{NCI}$ group if and only if the Cayley colour graph $\Gamma_{\chi+\overline{\chi}}=\operatorname{Cay}(G,\chi+\overline{\chi})$ is integral for every $\chi\in\operatorname{Irr}(G)$, where $\overline{~\cdot~}$ denotes the complex conjugation.
\end{proposition}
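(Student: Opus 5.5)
Set $f_\chi:=\chi+\overline{\chi}$ for $\chi\in\operatorname{Irr}(G)$. Since $\overline{\chi(g)}=\chi(g^{-1})$, we have $f_\chi(g)=\chi(g)+\chi(g^{-1})$. Thus $f_\chi$ is a class function with $f_\chi(g)=f_\chi(g^{-1})$, and its values are real algebraic integers. By Lemma \ref{spectrum-Cay-col-digraph}, the eigenvalues of $\Gamma_{f_\chi}$ are
$$\lambda_\psi=\frac{1}{\psi(1)}\sum_{g\in G}\bigl(\chi(g)+\overline{\chi(g)}\bigr)\psi(g)=\frac{|G|}{\psi(1)}\Bigl(\langle\psi,\overline{\chi}\rangle+\langle\psi,\chi\rangle\Bigr),$$
for $\psi\in\operatorname{Irr}(G)$. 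Here I will use the orthogonality relations, rewriting $\sum_g\chi(g)\psi(g)=|G|\langle\psi,\overline{\chi}\rangle$ and similarly for the other term. So each eigenvalue is a rational number times $|G|/\psi(1)$, and the eigenvalues are automatically rational. The real content of the proposition is therefore whether $\Gamma_{f_\chi}$ is a genuine Cayley colour graph with rational (integer) colours. I read ``integral'' in the sense of the paper's Definition of Cayley colour graphs, which requires $f:G\to\mathbb{Q}$ (in fact $\mathbb{Z}$, as in $\mathfrak{F}$). The plan is to reduce both directions to condition (3) of Theorem \ref{NCI-equivalent-theorem}.

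($\Rightarrow$) Suppose $G$ is $\operatorname{NCI}$. By Theorem \ref{NCI-equivalent-theorem}, $f_\chi(g)=\chi(g)+\chi(g^{-1})\in\mathbb{Z}$ for all $g$, so $f_\chi\in\mathfrak{F}$. The computation above shows its eigenvalues are rational. They are also algebraic integers, being eigenvalues of an integer matrix, hence integers. Alternatively, I can apply Lemma \ref{Cay-col-graph-integral-iff-cond-corollary}: for $h\in\mathbb{Z}_n^*$, inverse semi-rationality gives that $g^h$ is conjugate to $g$ or to $g^{-1}$, so $f_\chi(g^h)=f_\chi(g)$.

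($\Leftarrow$) Suppose each $\Gamma_{f_\chi}$ is an integral Cayley colour graph, in particular with integer colours $f_\chi(g)$. Then $\chi(g)+\chi(g^{-1})\in\mathbb{Z}$ for every $\chi$ and $g$, which is condition (3). Theorem \ref{NCI-equivalent-theorem} then gives that $G$ is $\operatorname{NCI}$.

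The main obstacle is interpretive rather than computational. If one allowed complex-valued $f_\chi$ in a Cayley colour digraph, the eigenvalues $\lambda_\psi$ computed above would always be rational, hence integers, and the converse would carry no information. So the proof must use that integrality of $\Gamma_{f_\chi}$ presupposes $f_\chi$ is $\mathbb{Q}$-valued, consistent with the definition and with the class $\mathfrak{F}$. I would state this explicitly. If a purely spectral route is wanted, I would try Galois conjugation via Lemma \ref{sigma-eta-chi-lemma} together with Lemma \ref{Cay-col-graph-integral-iff-cond-corollary}, to show $f_\chi^h=f_\chi$ for all $h$. Summed over all $\chi$ and using column orthogonality, this would force $g^h\in C_g\cup C_{g^{-1}}$.
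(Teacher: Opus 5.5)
Your proof is correct, and it is more direct than the paper's. The paper runs both directions through Lemma \ref{Cay-col-graph-integral-iff-cond-corollary}. For ($\Rightarrow$) it combines condition (3) of Theorem \ref{NCI-equivalent-theorem} with Lemma \ref{sigma-eta-chi-lemma} to check $(\chi+\overline{\chi})^{h}=\chi+\overline{\chi}$ for all $h\in\mathbb{Z}_n^{*}$. For ($\Leftarrow$) it reads $(\chi+\overline{\chi})^{h}=\chi+\overline{\chi}$ off integrality via the same lemma, then uses the Galois action to get $\chi(g)+\chi(g^{-1})\in\mathbb{Q}$.

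You instead compute the spectrum by orthogonality, and your formula gives more than you state. Since $\psi(1)\mid|G|$, every eigenvalue of $\operatorname{Cay}(G,\chi+\overline{\chi})$ lies in $\{0,|G|/\chi(1),2|G|/\chi(1)\}$, for \emph{every} group $G$. For instance, $G=\mathbb{Z}_5$ with a nontrivial $\chi$ gives eigenvalues $5,5,0,0,0$, although $\mathbb{Z}_5$ is not $\operatorname{NCI}$. This confirms your diagnosis: the only content of ($\Leftarrow$) is that the colours of a Cayley colour graph must be rational, hence integers, which is condition (3).

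It also exposes what the paper's ($\Leftarrow$) argument silently assumes. Lemma \ref{Cay-col-graph-integral-iff-cond-corollary} is stated for $\mathbb{Z}$-valued class functions, and its ``only if'' part genuinely fails without that hypothesis. In the $\mathbb{Z}_5$ example the spectrum is integral, yet $f^{2}\neq f$ for $f=\chi+\overline{\chi}$ (in the paper's notation $f^{h}(g)=f(g^{h})$). So invoking the lemma already presupposes the conclusion, and the Galois computation afterwards is redundant. Your route gives transparency and a ($\Rightarrow$) direction needing neither Lemma \ref{Cay-col-graph-integral-iff-cond-corollary} nor Lemma \ref{sigma-eta-chi-lemma}. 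The paper's route gives only uniformity with the $f^h=f$ criterion used elsewhere.

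One caution about your last paragraph: by your own computation, no \emph{purely} spectral argument can work. The route you sketch must feed integer-valuedness of $\chi+\overline{\chi}$ into Lemma \ref{Cay-col-graph-integral-iff-cond-corollary}, which is exactly the paper's argument with the same presupposition. Your column-orthogonality step forcing $g^h\in C_g\cup C_{g^{-1}}$ is valid, and gives a direct proof of (3)$\Rightarrow$(2) in Theorem \ref{NCI-equivalent-theorem}. It is not needed here, since (3)$\Rightarrow$(1) is already available.
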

\pf Let $\chi\in\operatorname{Irr}(G)$ and $g\in G$. Since $\Gamma_{\chi+\overline{\chi}}=\operatorname{Cay}(G,\chi+\overline{\chi})$ is integral, by Lemma \ref{Cay-col-graph-integral-iff-cond-corollary}, $(\chi+\overline{\chi})^h=\chi+\overline{\chi}$ for all $h\in\mathbb{Z}_n^{*}$, where $n=|G|$. Consider the isomorphism $\eta:\operatorname{Gal}(\mathbb{Q}(\zeta_n)/\mathbb{Q})\to\mathbb{Z}_n^{*}$. Let $\sigma\in\operatorname{Gal}(\mathbb{Q}(\zeta_n)/\mathbb{Q})$. Then $\eta(\sigma)\in\mathbb{Z}_n^{*}$. Then by Lemma \ref{sigma-eta-chi-lemma}, we have
\begin{align*}
\sigma(\chi(g)+\chi(g^{-1}))&=\chi(g^{\eta(\sigma)})+\overline{\chi}(g^{\eta(\sigma)})\\
&=(\chi+\overline{\chi})^{\eta(\sigma)}(g)\\
&=(\chi+\overline{\chi})(g)\\
&=\chi(g)+\chi(g^{-1}).
\end{align*}

Since $\sigma$ is arbitrary, we have $\chi(g)+\chi(g^{-1})\in\mathbb{Q}$ which implies $\chi(g)+\chi(g^{-1})\in\mathbb{Z}$ by Lemma \ref{chi-algebraic-integer-lemma}. By Theorem \ref{NCI-equivalent-theorem}, $G$ is $\operatorname{NCI}$.

Conversely, let $G$ be an $\operatorname{NCI}$ group. Let $\chi\in\operatorname{Irr}(G)$ and $g\in G$. By Theorem \ref{NCI-equivalent-theorem}, $\chi(g)+\chi(g^{-1})\in\mathbb{Z}$. Thus $\sigma(\chi(g)+\chi(g^{-1}))=\chi(g)+\chi(g^{-1})$ for all $\sigma\in\operatorname{Gal}(\mathbb{Q}(\zeta_n)/\mathbb{Q})$. Let $h'\in\mathbb{Z}_n^{*}$. Then there exists $\sigma'\in\operatorname{Gal}(\mathbb{Q}(\zeta_n)/\mathbb{Q})$, such that $\eta(\sigma')=h'$. Thus by Lemma \ref{sigma-eta-chi-lemma}, we have
\begin{align*}
(\chi+\overline{\chi})^{h'}(g)&=\chi(g^{h'})+\chi((g^{-1})^{h'})\\
&=\chi(g^{\eta(\sigma')})+\chi((g^{-1})^{\eta(\sigma')})\\
&=\sigma'(\chi(g)+\chi(g^{-1}))\\
&=\chi(g)+\chi(g^{-1})\\
&=(\chi+\overline{\chi})(g).
\end{align*}

Hence $(\chi+\overline{\chi})^{h'}=(\chi+\overline{\chi})$. Since $h'\in\mathbb{Z}_n^{*}$ is arbitrary, by Lemma \ref{Cay-col-graph-integral-iff-cond-corollary}, the Cayley colour graph $\Gamma_{\chi+\overline{\chi}}=\operatorname{Cay}(G,\chi+\overline{\chi})$ is integral.\qed

\medskip

We have already seen from Remark \ref{Q-ISR-SR-not-subgroup-closed-remark} that subgroup of an $\operatorname{NCI}$ group may not be $\operatorname{NCI}$. However, from [\cite{chillag2010semi}, Lemma $4$], one can see that quotients preserves the $\operatorname{NCI}$ property. We give an alternative proof of that result below.

\begin{proposition}\label{NCI-quotient-closed-proposition}(\cite{chillag2010semi}, Lemma $4$)\label{quotient-of-NCI-group-NCI}
Quotient of an $\operatorname{NCI}$ group is $\operatorname{NCI}$.
\end{proposition}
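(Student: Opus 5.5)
The plan is to use the character-theoretic characterization in Theorem \ref{NCI-equivalent-theorem}, condition (3), and transfer it along the quotient map via inflation of characters. Let $G$ be $\operatorname{NCI}$, let $N\trianglelefteq G$, and write $\pi:G\to G/N$ for the canonical projection. We must show that $\theta(gN)+\theta(g^{-1}N)\in\mathbb{Z}$ for every $\theta\in\operatorname{Irr}(G/N)$ and every $gN\in G/N$. Once this holds, Theorem \ref{NCI-equivalent-theorem} $(3)\Rightarrow(1)$ gives that $G/N$ is $\operatorname{NCI}$.

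Given $\theta\in\operatorname{Irr}(G/N)$, I will form its inflation $\chi=\theta\circ\pi$. This is a character of $G$, afforded by the representation $\rho\circ\pi$ where $\rho$ affords $\theta$. It is irreducible: $\rho\circ\pi$ and $\rho$ have the same image, so they have the same invariant subspaces. Alternatively, one can check directly that $\langle\chi,\chi\rangle_G=\frac{|N|}{|G|}\sum_{gN}|\theta(gN)|^2=\langle\theta,\theta\rangle_{G/N}=1$.

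Then, for any $g\in G$,
$$\theta(gN)+\theta((gN)^{-1})=\chi(g)+\chi(g^{-1}),$$
because $(gN)^{-1}=g^{-1}N$. By Theorem \ref{NCI-equivalent-theorem} $(1)\Rightarrow(3)$ applied to $G$, the right-hand side is an integer. Since every coset $gN$ of $G/N$ arises from some $g\in G$, the required condition holds on all of $G/N$, and the proof is complete.

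There is no serious obstacle. The only point needing care is the irreducibility of the inflated character, which is standard and handled by the inner-product computation above. An alternative route would use Eulerian sets and Theorem \ref{Normal-Eulerian-iff-integral-theorem}, pulling back a normal inverse-closed subset of $G/N$ to $G$. That route, however, requires checking that atoms map onto atoms, so the character argument is cleaner.
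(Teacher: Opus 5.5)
Your proof is correct and follows the same route as the paper: you inflate $\theta\in\operatorname{Irr}(G/N)$ to $\theta\circ\pi\in\operatorname{Irr}(G)$, note that $\theta(gN)+\theta(g^{-1}N)=\chi(g)+\chi(g^{-1})$, and apply Theorem \ref{NCI-equivalent-theorem} in both directions. Your inner-product check that the inflated character is irreducible is a detail the paper simply asserts.
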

\pf Let $G$ be an $\operatorname{NCI}$ group and $H$ be a normal subgroup of $G$. Let $\chi\in\operatorname{Irr}(G/H)$ and $gH\in G/H$. Then $\chi(gH)+\chi(g^{-1}H)=\hat{\chi}(g)+\hat{\chi}(g^{-1})$ where $\hat{\chi}=\chi\pi\in\operatorname{Irr}(G)$. By Theorem \ref{NCI-equivalent-theorem}, $\chi(gH)+\chi(g^{-1}H)\in\mathbb{Z}$ and hence $G/H$ is $\operatorname{NCI}$.\qed

\medskip

For two $\operatorname{NCI}$ groups $G$ and $H$, the direct product $G\times H$ may not be $\operatorname{NCI}$. For example, both $\mathbb{Z}_3$ and $\mathbb{Z}_4$ are $\operatorname{NCI}$ groups while $\mathbb{Z}_3\times\mathbb{Z}_4\cong\mathbb{Z}_{12}$ is not. However, we have the following result.

\begin{proposition}\label{NCI-direct-product-proposition}
Let $G$ be a $\mathbb{Q}$-group and $H$ be an $\operatorname{NCI}$ group. Then $G\times H$ is an $\operatorname{NCI}$ group. Conversely, if $G\times H$ is $\operatorname{NCI}$, then both $G$ and $H$ are $\operatorname{NCI}$.
\end{proposition}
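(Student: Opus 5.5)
The plan is to use the character-theoretic criterion of Theorem \ref{NCI-equivalent-theorem}(3). Recall that the irreducible characters of $G\times H$ are exactly the products $\chi\times\psi$ with $\chi\in\operatorname{Irr}(G)$ and $\psi\in\operatorname{Irr}(H)$, where $(\chi\times\psi)(g,h)=\chi(g)\psi(h)$.

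For the forward direction, take $(g,h)\in G\times H$ and an irreducible character $\chi\times\psi$. Since $G$ is a $\mathbb{Q}$-group, $\chi(g)$ is rational and hence an integer by Lemma \ref{chi-algebraic-integer-lemma}. The same holds for $\chi(g^{-1})$. Moreover $\chi(g^{-1})=\overline{\chi(g)}=\chi(g)$, because $\chi(g)$ is real. Therefore
$$(\chi\times\psi)(g,h)+(\chi\times\psi)(g^{-1},h^{-1})=\chi(g)\bigl(\psi(h)+\psi(h^{-1})\bigr).$$
The first factor is an integer by the $\mathbb{Q}$-group hypothesis. The second factor is an integer by Theorem \ref{NCI-equivalent-theorem} applied to the $\operatorname{NCI}$ group $H$. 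So the whole expression is an integer, and Theorem \ref{NCI-equivalent-theorem} gives that $G\times H$ is $\operatorname{NCI}$.

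For the converse, I would use Proposition \ref{NCI-quotient-closed-proposition}. Both $G\cong (G\times H)/(\{1\}\times H)$ and $H\cong (G\times H)/(G\times\{1\})$ are quotients of the $\operatorname{NCI}$ group $G\times H$, so both are $\operatorname{NCI}$. Alternatively, one can argue directly with characters: evaluate $\chi\times 1_H$ at $(g,1)$ to get $\chi(g)+\chi(g^{-1})\in\mathbb{Z}$, and do the same with $1_G\times\psi$ for $H$.

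The only point that needs care is the product structure in the forward direction. It is the equality $\chi(g)=\chi(g^{-1})$ for rational-valued characters that lets us factor out $\chi(g)$. This is exactly where the $\mathbb{Q}$-group hypothesis is needed; it cannot be weakened to $\operatorname{NCI}$, as the example $\mathbb{Z}_3\times\mathbb{Z}_4$ shows. No other step looks difficult.
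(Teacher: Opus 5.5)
Your proposal is correct and follows essentially the same route as the paper. The forward direction factors $\Phi(g,h)+\Phi\bigl((g,h)^{-1}\bigr)=\chi(g)\bigl(\psi(h)+\psi(h^{-1})\bigr)$ using $\chi(g)=\chi(g^{-1})\in\mathbb{Z}$ and then applies Theorem \ref{NCI-equivalent-theorem}. The paper justifies $\chi(g)=\chi(g^{-1})$ via $g$ being conjugate to $g^{-1}$, whereas you use that $\chi(g)$ is real; both are fine. The converse follows from quotient-closure (Proposition \ref{NCI-quotient-closed-proposition}), exactly as in the paper.
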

\pf Let $\Phi$ be an irreducible character of $G\times H$. Then $\Phi=\chi\varphi$ for some $\chi\in\operatorname{Irr}(G)$ and $\varphi\in\operatorname{Irr}(H)$. Let $(g,h)\in G\times H$. Now 
$$\Phi(g,h)+\Phi((g,h)^{-1})=\chi(g)\varphi(h)+\chi(g^{-1})\varphi(h^{-1}).$$
Since $G$ is rational, $g$ is conjugate to $g^{-1}$. Thus $\chi(g)=\chi(g^{-1})\in\mathbb{Z}$. Hence 
$$\Phi(g,h)+\Phi((g,h)^{-1})=\chi(g)(\varphi(h)+\varphi(h^{-1})).$$
Since $H$ is $\operatorname{NCI}$, by Theorem \ref{NCI-equivalent-theorem}, $\varphi(h)+\varphi(h^{-1})\in\mathbb{Z}$. Thus we have $\Phi(g,h)+\Phi((g,h)^{-1})\in\mathbb{Z}$. Hence $G\times H$ is $\operatorname{NCI}$ by Theorem \ref{NCI-equivalent-theorem}. The converse part follows from Proposition \ref{NCI-quotient-closed-proposition}.\qed




\begin{proposition}
Let $G$ be a nilpotent $\operatorname{NCI}$ group. Then $G$ is a $\lbrace{2,3}\rbrace$-group. Moreover, the Sylow subgroups of $G$ are $\operatorname{NCI}$.
\end{proposition}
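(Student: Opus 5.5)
Since $G$ is nilpotent, it is the internal direct product of its Sylow subgroups, $G\cong P_1\times\cdots\times P_r$ with $P_i$ a Sylow $p_i$-subgroup. The second assertion follows from this decomposition together with Proposition \ref{NCI-quotient-closed-proposition}. Each $P_i$ is isomorphic to the quotient $G/\prod_{j\neq i}P_j$, and quotients of $\operatorname{NCI}$ groups are $\operatorname{NCI}$, so every Sylow subgroup is $\operatorname{NCI}$. Equivalently, this is the converse part of Proposition \ref{NCI-direct-product-proposition} applied repeatedly.

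For the first assertion, suppose a prime $p\ge 5$ divides $|G|$. Then the Sylow $p$-subgroup $P$ is a nontrivial $\operatorname{NCI}$ $p$-group by the previous step. The plan is to pass to an abelian quotient. Since $P$ is a nontrivial $p$-group, $P/[P,P]$ is a nontrivial abelian $p$-group. It is $\operatorname{NCI}$ by Proposition \ref{NCI-quotient-closed-proposition}. By Corollary \ref{abelian-CCI-CI-FCCI-NCI-equivalence-corollary}, it must be isomorphic to $\mathbb{Z}_2^n\times\mathbb{Z}_3^m$ or $\mathbb{Z}_2^n\times\mathbb{Z}_4^m$, so its order is a product of powers of $2$ and $3$. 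This contradicts its being a nontrivial $p$-group with $p\ge5$. Hence $|G|$ has only the prime divisors $2$ and $3$.

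If one prefers to avoid the corollary, a direct alternative is to take a further quotient of $P/[P,P]$ isomorphic to $\mathbb{Z}_p$. Theorem \ref{NCI-equivalent-theorem} would then require every element of $\mathbb{Z}_p$ to be inverse semi-rational. This fails, because in an abelian group conjugacy classes are singletons and $\operatorname{Atom}(g)$ has $p-1>2$ elements, whereas $C_g\cup C_{g^{-1}}$ has at most $2$. Equivalently, the character value $\zeta_p+\zeta_p^{-1}$ is not rational for $p\ge5$.

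There is no real obstacle here. The only points needing care are noting that a nontrivial $p$-group has a nontrivial abelianization (so that a $\mathbb{Z}_p$ quotient exists), and that the Sylow subgroups arise as quotients of $G$ in the nilpotent case.
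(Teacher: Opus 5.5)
Your proof is correct. For the second assertion it is the same as the paper's: Sylow subgroups are direct factors, hence quotients, and Proposition \ref{NCI-quotient-closed-proposition} applies. For the first assertion you take a different route. The paper passes to a \emph{subgroup}, namely $Z(G)$. It uses Lemma \ref{semi-rational-NCI-rational-center-closed-lemma}(2): central elements have singleton conjugacy classes, so inverse semi-rationality descends to the center. It combines this with the fact that in a nilpotent group $Z(G)$ meets every nontrivial Sylow subgroup, so it contains an element of order $p$. You instead pass to the abelian \emph{quotient} $P/[P,P]$ and use only quotient-closure. Your route rests on a single closure property of $\operatorname{NCI}$ groups, which is natural since $\operatorname{NCI}$ is not subgroup-closed. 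The paper's route exploits the one subgroup, $Z(G)$, to which the property does descend. Both finish with Corollary \ref{abelian-CCI-CI-FCCI-NCI-equivalence-corollary}, and both implicitly prove a little more. The paper's argument shows $Z(G)$ is a $\{2,3\}$-group for every $\operatorname{NCI}$ group, and yours shows the same for $G/[G,G]$. Nilpotency is exactly what lets either of these detect every prime divisor of $|G|$. Your $\mathbb{Z}_p$-quotient variant (an atom of size $p-1>2$ against at most two singleton classes) avoids the Chillag--Dolfi lemma hidden inside that corollary. The same count applied to a central element of order $p$ would make the paper's argument self-contained too.
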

\pf Suppose $p$ divides $|G|$, for some prime $p\ge 5$. Since $G$ is $\operatorname{NCI}$, $Z(G)$ is $\operatorname{NCI}$ by Lemma \ref{semi-rational-NCI-rational-center-closed-lemma} $(2)$. But since $G$ is nilpotent, $Z(G)$ contains an element of order $p$, which contradicts Corollary \ref{abelian-CCI-CI-FCCI-NCI-equivalence-corollary}. This proves the first part. The second part follows from Proposition \ref{NCI-direct-product-proposition}.\qed

\begin{corollary}
Let $G$ be an $\operatorname{NCI}$ $p$-group. Then $p\in\lbrace{2,3}\rbrace$.
\end{corollary}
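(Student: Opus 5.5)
The plan is to derive this corollary directly from the preceding Proposition (a nilpotent $\operatorname{NCI}$ group is a $\lbrace{2,3}\rbrace$-group), since every $p$-group is nilpotent. Let $G$ be an $\operatorname{NCI}$ $p$-group. A finite $p$-group is nilpotent, so the preceding Proposition applies. It gives that every prime divisor of $|G|$ lies in $\lbrace{2,3}\rbrace$. Since $|G|$ is a power of $p$, we get $p\in\lbrace{2,3}\rbrace$.

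The only subtlety is the trivial group, where $p$ is not determined by $G$. We will treat this by the usual convention that a $p$-group is nontrivial, or else note that the statement is vacuous there. With $G\neq 1$, the prime $p$ divides $|G|$, and the conclusion follows immediately.

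If one prefers to avoid citing the Proposition, the same argument can be run directly. The center $Z(G)$ of a nontrivial $p$-group is nontrivial, so it contains an element of order $p$. By Lemma \ref{semi-rational-NCI-rational-center-closed-lemma}~(2), $Z(G)$ is inverse semi-rational, hence $\operatorname{NCI}$ by Theorem \ref{NCI-equivalent-theorem}. Then Corollary \ref{abelian-CCI-CI-FCCI-NCI-equivalence-corollary} forces $Z(G)\cong\mathbb{Z}_2^n\times\mathbb{Z}_3^m$ or $Z(G)\cong\mathbb{Z}_2^n\times\mathbb{Z}_4^m$. Hence $p\in\lbrace{2,3}\rbrace$. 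There is no real obstacle in this argument.
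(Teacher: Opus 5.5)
Your proposal is correct and matches the paper: the corollary follows at once from the preceding Proposition, because $p$-groups are nilpotent. Your direct alternative via $Z(G)$ is the paper's own proof of that Proposition specialized to $p$-groups, and your remark that the trivial group must be excluded by convention is a fair caveat.
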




\section*{Acknowledgement}
The first author is supported by the funding of UGC [NTA Ref. No. 211610129182], Govt. of India. 

\bibliographystyle{abbrv}
\bibliography{ref}

@book{isaacs1994character,
  title={Character theory of finite groups},
  author={Isaacs, I Martin},
  volume={69},
  year={1994},
  publisher={Courier Corporation}
}

@book{huppert2013endliche,
  title={Finite Groups I},
  author={Huppert, Bertram},
  year={2013},
  publisher={Springer-verlag}
}

@inproceedings{harary1974graphs,
  title={Which graphs have integral spectra?},
  author={Harary, Frank and Schwenk, Allen J},
  booktitle={Graphs and Combinatorics: Proceedings of the Capital Conference on Graph Theory and Combinatorics at the George Washington University June 18--22, 1973},
  pages={45--51},
  year={1974},
  organization={Springer}
}

@inproceedings{bachle2018integral,
  title={Integral group rings of solvable groups with trivial central units},
  author={B{\"a}chle, Andreas},
  booktitle={Forum Mathematicum},
  volume={30},
  number={4},
  pages={845--855},
  year={2018},
  organization={De Gruyter}
}

@article{babai1979spectra,
  title={Spectra of Cayley graphs},
  author={Babai, L{\'a}szl{\'o}},
  journal={Journal of Combinatorial Theory, Series B},
  volume={27},
  number={2},
  pages={180--189},
  year={1979},
  publisher={Elsevier}
}

@article{stein2007sage,
  title={Sage mathematics software},
  author={Stein, William},
  journal={http://www. sagemath. org/},
  year={2007},
  publisher={The Sage Group}
}

@article{ahmadi2009graphs,
  title={Graphs with integral spectrum},
  author={Ahmadi, Omran and Alon, Noga and Blake, Ian F and Shparlinski, Igor E},
  journal={Linear Algebra and its Applications},
  volume={430},
  number={1},
  pages={547--552},
  year={2009},
  publisher={Elsevier}
}

@article{abdollahi2009cayley,
  title={Which Cayley graphs are integral?},
  author={Abdollahi, Alireza and Vatandoost, Ebrahim},
  journal={The Electronic Journal of Combinatorics},
  volume={16},
  number={1},
  pages={R122},
  year={2009}
}

@article{klotz2010integral,
  title={Integral Cayley graphs over abelian groups},
  author={Klotz, Walter and Sander, Torsten},
  journal={The Electronic Journal of Combinatorics},
  pages={R81--R81},
  year={2010}
}

@article{alperin2012integral,
  title={Integral sets and Cayley graphs of finite groups},
  author={Alperin, Roger C and Peterson, Brian L},
  journal={The Electronic Journal of Combinatorics},
  pages={1--12},
  year={2012}
}

@article{huang2021integral,
  title={Integral and distance integral Cayley graphs over generalized dihedral groups},
  author={Huang, Jing and Li, Shuchao},
  journal={Journal of Algebraic Combinatorics},
  volume={53},
  number={4},
  pages={921--943},
  year={2021},
  publisher={Springer}
}

@article{cheng2019integral,
  title={Integral Cayley graphs over dicyclic group},
  author={Cheng, Tao and Feng, Lihua and Huang, Hualin},
  journal={Linear Algebra and its Applications},
  volume={566},
  pages={121--137},
  year={2019},
  publisher={Elsevier}
}

@article{ahmady2014integral,
  title={Integral Cayley graphs and groups},
  author={Ahmady, Azhvan and Bell, Jason P and Mohar, Bojan},
  journal={SIAM Journal on Discrete Mathematics},
  volume={28},
  number={2},
  pages={685--701},
  year={2014},
  publisher={SIAM}
}

@article{ma2016finite,
  title={On finite groups all of whose cubic Cayley graphs are integral},
  author={Ma, Xuanlong and Wang, Kaishun},
  journal={Journal of Algebra and Its Applications},
  volume={15},
  number={06},
  pages={1650105},
  year={2016},
  publisher={World Scientific}
}

@article{lu2018integral,
  title={Integral Cayley graphs over dihedral groups},
  author={Lu, Lu and Huang, Qiongxiang and Huang, Xueyi},
  journal={Journal of Algebraic Combinatorics},
  volume={47},
  pages={585--601},
  year={2018},
  publisher={Springer}
}

@article{cheng2023integral,
  title={Integral Cayley graphs over semi-dihedral groups},
  author={Cheng, Tao and Feng, Lihua and Yu, Guihai and Zhang, Chi},
  journal={Applicable Analysis and Discrete Mathematics},
  volume={17},
  number={2},
  pages={334--356},
  year={2023},
  publisher={JSTOR}
}

@article{godsil2025integral,
  title={Integral normal Cayley graphs},
  author={Godsil, Chris and Spiga, Pablo},
  journal={Journal of Algebraic Combinatorics},
  volume={62},
  number={1},
  pages={20},
  year={2025},
  publisher={Springer}
}

@article{guo2019integral,
  title={Integral Cayley graphs},
  author={Guo, W and Lytkina, Daria Viktorovna and Mazurov, Victor Danilovich and Revin, Danila Olegovich},
  journal={Algebra and Logic},
  volume={58},
  number={4},
  pages={297--305},
  year={2019},
  publisher={Springer}
}

@article{zieschang1988cayley,
  title={Cayley graphs of finite groups},
  author={Zieschang, Paul Hermann},
  journal={Journal of Algebra},
  volume={118},
  number={2},
  pages={447--454},
  year={1988},
  publisher={Elsevier}
}

@article{abdollahi2014groups,
  title={Groups all of whose undirected Cayley graphs are integral},
  author={Abdollahi, Alireza and Jazaeri, Mojtaba},
  journal={European Journal of Combinatorics},
  volume={38},
  pages={102--109},
  year={2014},
  publisher={Elsevier}
}

@article{foster2016spectra,
  title={Spectra of Cayley graphs of complex reflection groups},
  author={Foster-Greenwood, Briana and Kriloff, Cathy},
  journal={Journal of Algebraic Combinatorics},
  volume={44},
  number={1},
  pages={33--57},
  year={2016},
  publisher={Springer}
}

@article{balinska2002survey,
  title={A survey on integral graphs},
  author={Bali{\'n}ska, K and Cvetkovi{\'c}, Drag{\u{o}}s and Radosavljevi{\'c}, Zoran and Simi{\'c}, Slobodan and Stevanovi{\'c}, Dragan},
  journal={Publikacije Elektrotehni{\v{c}}kog fakulteta. Serija Matematika},
  pages={42--65},
  year={2002},
  publisher={JSTOR}
}

@article{abdollahi2011integral,
  title={Integral quartic Cayley graphs on abelian groups},
  author={Abdollahi, Alireza and Vatandoost, Ebrahim},
  journal={The Electronic Journal of Combinatorics},
  pages={P89--P89},
  year={2011}
}

@article{klotz2011integral,
  title={Integral Cayley graphs defined by greatest common divisors},
  author={Klotz, Walter and Sander, Torsten},
  journal={The Electronic Journal of Combinatorics},
  pages={P94--P94},
  year={2011}
}

@article{estelyi2014groups,
  title={On Groups all of whose Undirected Cayley Graphs of Bounded Valency are Integral},
  author={Est{\'e}lyi, Istv{\'a}n and Kov{\'a}cs, Istv{\'a}n},
  journal={The Electronic Journal of Combinatorics},
  pages={P4--45},
  year={2014}
}

@article{poddar2026algebraic,
  title={Algebraic degree of Cayley colour graphs},
  author={Poddar, Sauvik},
  journal={arXiv preprint arXiv:2602.08634},
  year={2026}
}

@article{conrad2020subgroup,
  title={Subgroup series II},
  author={Conrad, K.},
  journal={Lecture Notes available at https://kconrad. math. uconn. edu/blurbs/grouptheory/subgpseries2. pdf},
  year={2020}
}

@article{chen2014eigenvalues,
  title={On the eigenvalues of certain Cayley graphs and arrangement graphs},
  author={Chen, Bai Fan and Ghorbani, Ebrahim and Wong, Kok Bin},
  journal={Linear Algebra and its Applications},
  volume={444},
  pages={246--253},
  year={2014},
  publisher={Elsevier}
}

@article{chillag2010semi,
  title={Semi-rational solvable groups},
  author={Chillag, David and Dolfi, Silvio and others},
  journal={J. Group Theory},
  volume={13},
  number={4},
  pages={535--548},
  year={2010}
}

@article{ritter1990integral,
  title={Integral group rings with trivial central units},
  author={Ritter, J{\"u}rgen and Sehgal, Sudarshan K},
  journal={Proceedings of the American Mathematical Society},
  pages={327--329},
  year={1990},
  publisher={JSTOR}
}

@article{li2002isomorphisms,
  title={On isomorphisms of finite Cayley graphs—a survey},
  author={Li, Cai Heng},
  journal={Discrete mathematics},
  volume={256},
  number={1-2},
  pages={301--334},
  year={2002},
  publisher={Elsevier}
}

\end{document}